\documentclass[12pt]{article}




\usepackage{amssymb}
\usepackage{amsthm}
\theoremstyle{plain}
\usepackage{afterpage,capt-of}
\usepackage{natbib}
\usepackage[a4paper,top=3cm,bottom=3cm,left=3cm,right=3cm]{geometry}

\newtheorem{prop}{Property}
\theoremstyle{remark}
\newtheorem{remark}{Remark}





\usepackage{comment}
\usepackage[T1]{fontenc}
\usepackage[utf8]{inputenc}
\usepackage{graphicx}
\usepackage{amsmath,amsfonts,amssymb,amsbsy,stmaryrd,amsthm}
\usepackage{tikz}
\usepackage{pgfplots}
\SetSymbolFont{stmry}{bold}{U}{stmry}{m}{n}
\makeatletter
\newif\if@restonecol
\makeatother

\usepackage[algo2e,ruled,lined,titlenumbered,commentsnumbered]{algorithm2e}
\usepackage{lscape}
\usepackage{xcolor}

\newcommand{\un}[1]{\ensuremath{\underline{#1}}}
\newcommand{\uu}[1]{\ensuremath{\un{\un{#1}}}}
\newcommand{\dime}{d}

\newcommand{\sig}{\ensuremath{\uu{\sigma}}}
\newcommand{\sigH}{\ensuremath{\sig_H}}
\newcommand{\sigp}{\ensuremath{\sig_p}}
\newcommand\strain[1]{\uu{\varepsilon}\left(#1\right)}
\newcommand{\dep}{\ensuremath{\un{u}}}
\newcommand{\depv}{\ensuremath{\un{v}}}
\newcommand{\depH}{\ensuremath{\dep_H}}
\newcommand{\depp}{\ensuremath{\dep_p}}
\newcommand{\depvH}{\ensuremath{\depv_H}}

\newcommand{\depN}{\ensuremath{\mathbf{u}}}
\newcommand{\lamN}{\ensuremath{\boldsymbol{\lambda}}}
\newcommand\shapef{\varphi}
\newcommand\shapev{\boldsymbol{\shapef}_H}
\newcommand{\stiff}{\ensuremath{\mathbf{K}}}
\newcommand{\force}{\ensuremath{\mathbf{f}}}
\newcommand{\domain}{\ensuremath{\Omega}}
\newcommand{\domainH}{\ensuremath{\domain}}
\newcommand{\setelem}{\ensuremath{\mathcal{T}}}
\newcommand{\setedge}{\ensuremath{\mathcal{E}}}

\newcommand{\setface}{\ensuremath{\mathcal{F}}}
\newcommand{\setvertex}{\ensuremath{\mathcal{V}}}

\newcommand{\setedgei}{\ensuremath{\overset{\circ}{\mathcal{E}}}}
\newcommand{\setedgeii}{\ensuremath{\overset{\circ\circ}{\mathcal{E}}}}

\newcommand{\setfacei}{\ensuremath{\overset{\circ}{\mathcal{F}}}}
\newcommand{\setvertexi}{\ensuremath{\overset{\circ}{\mathcal{V}}}}

\newcommand{\setedgee}{\ensuremath{\mathcal{E}_\partial}}

\newcommand{\setvertexe}{\ensuremath{{\mathcal{V}_\partial}}}
\newcommand{\setfacee}{\ensuremath{\mathcal{F}_\partial}}

\newcommand{\card}[1]{\ensuremath{|#1|}}

\newcommand{\bound}{\ensuremath{\partial}}
\newcommand{\boundi}{\ensuremath{\tilde{\partial}^{-1}}}
\newcommand{\boundii}{\ensuremath{\tilde{\partial}^{-2}}}

\newcommand\hooke{\mathbb{H}}
\newcommand\KA{\ensuremath{\mathrm{KA}}}
\newcommand\KAo{\ensuremath{\KA^0}}
\newcommand\KAH{\ensuremath{\KA_H}}
\newcommand\KAp{\ensuremath{\KA_p}}
\newcommand\KAHo{\ensuremath{\KA_H^0}}
\newcommand\SA{\ensuremath{\mathrm{SA}}}

\newcommand\RBM{\ensuremath{\mathrm{RBM}}}

\newcommand\ecr[2]{\ensuremath{\mathrm{e_{CR_{#2}}(#1)}}}
\newcommand\enernorm[2]{\|#1\|_{\hooke^{-1},#2}}

\newcommand{\nga}{\ensuremath{\un{n}^\Gamma}}
\newcommand{\tu}{\ensuremath{\hat{\dep}}}
\newcommand{\ts}{\ensuremath{\hat{\sig}}}
\newcommand{\tF}{\ensuremath{\hat{\un{F}}^\Gamma}}
\newcommand{\stF}{\ensuremath{\hat{{F}}^\Gamma}}
\newcommand{\sFH}{\ensuremath{{F}_H^\Gamma}}
\newcommand{\FH}{\ensuremath{\un{F}_H^\Gamma}}

\newcommand{\matD}{\ensuremath{\boldsymbol{\Delta}}}

\newcommand{\veci}{\ensuremath{{\mathbf{i}}}}
\newcommand{\vecR}{\ensuremath{{\mathbf{R}}}}

\newcommand{\vecW}{\ensuremath{\hat{\mathbf{W}}}}
\newcommand{\vecWnh}{\ensuremath{{\mathbf{W}}}}
\newcommand{\matG}{\ensuremath{\mathbf{G}}}
\newcommand{\mata}{\ensuremath{\mathbf{a}}}
\newcommand{\matb}{\ensuremath{\mathbf{b}}}
\newcommand{\matc}{\ensuremath{\mathbf{c}}}
\newcommand{\matd}{\ensuremath{\mathbf{d}}}
\newcommand{\mate}{\ensuremath{\mathbf{e}}}

\newcommand{\matl}{\ensuremath{\mathbf{l}}}
\newcommand{\maty}{\ensuremath{\mathbf{y}}}
\newcommand{\matx}{\ensuremath{\mathbf{x}}}

\newcommand{\matU}{\ensuremath{{\mathbf{U}}}}
\newcommand{\matV}{\ensuremath{{\mathbf{V}}}}
\newcommand{\matB}{\ensuremath{\mathbf{B}}}

\newcommand{\matI}{\ensuremath{{\mathbf{I}}}}

\newcommand{\matO}{\ensuremath{\mathbf{0}}}
\newcommand{\matN}{\ensuremath{\mathbf{N}}}
\newcommand{\kerG}{\ensuremath{\mathbf{Z}}}
\newcommand{\matM}{\ensuremath{\mathbf{M}}}
\newcommand{\vvecW}{\ensuremath{\un{\vecW}}}
\newcommand{\tvecR}{\ensuremath{{\tilde{\vecR}}}}
\newcommand{\basis}{\ensuremath{\mathcal{B}}}
\newcommand{\meas}{\ensuremath{\operatorname{meas}}}
\newcommand{\R}{\ensuremath{\mathbb{R}}}

\title{Study of the strong prolongation equation for the construction of statically admissible stress fields: implementation and optimization}
\author{Valentine Rey, Pierre Gosselet, Christian Rey\footnote{valentine.rey@lmt.ens-cachan.fr,gosselet@lmt.ens-cachan.fr,christian.rey@lmt.ens-cachan.fr} \\
LMT-Cachan / ENS-Cachan, CNRS, UPMC, Pres UniverSud Paris \\
61, avenue du pr\'esident Wilson, 94235 Cachan, France \\
}
\begin{document}




\maketitle
\begin{abstract}
This paper focuses on the construction of statically admissible stress fields (SA-fields) for \textit{a posteriori} error estimation. In the context of verification, the recovery of such fields enables to provide strict upper bounds of the energy norm of the discretization error between the known finite element solution and the unavailable exact solution. The reconstruction is a difficult and decisive step insofar as the effectiveness of the estimator strongly depends on the quality of the SA-fields. This paper examines the strong prolongation hypothesis, which is the starting point of the Element Equilibration Technique (EET). We manage to characterize the full space of SA-fields satisfying the prolongation equation so that optimizing inside this space is possible. 
The computation exploits topological properties of the mesh so that implementation is easy and costs remain controlled. In this paper, we describe the new technique in details and compare it to the classical EET and to the flux-free technique for different 2D mechanical problems.
The method is explained on first degree triangular elements, but we show how extensions to different elements and to 3D are straightforward.

\noindent\textbf{Keywords : } verification; finite element method; error bounds; admissible stress field.

\end{abstract}

\section{Introduction}
This paper deals with the post-processing of statically admissible (SA) stress fields from a classical finite element (FE) solution in displacement. This step is a requirement of verification techniques if one wants precise, strict and constant-free bounds between the FE solution and the exact (unavailable) solution.

Indeed methods based on the use of SA-field, like the error in constitutive equation \citep{Lad75,Lad83} or like the equilibrated residuals \citep{Bab78b,Bab87}, provide efficient error estimators, for global or local quantities \citep{Lad99b,Pru99,Ohn01,Bec01}, even in some nonlinear contexts \citep{Bab82,Lad86,Lad01bis,Lou03bis} and domain decomposition methods \citep{Par10}. 

The computation of SA-fields is a complex task and depending on the quality of the field, the error estimators provide an effective bound or not. With respect to that criterion, the optimal SA-field is the one which minimizes the complementary energy. In order to build such a field, a possibility is to use a dual analysis \citep{Kem03,Kem09}. However dual codes are not very common and the computational cost for such a computation are very high.

An alternative approach consists in post-processing the SA-field from the FE solution. The Element Equilibration Technique \citep{Lad83,Lad97}, its variants \citep{Lad10bis,Ple11}, or the Flux-Free techniques \citep{Par06,Par09bis,Cot09} are typical examples of such alternatives. Compared to the dual approach, they involve many short-range computations (on star-patches and elements), they are naturally pluggable inside classical FE software, but they give less efficient estimates. Even if the computational complexity is much reduced compared to the dual approach, they remain complex to implement and CPU-time consuming.

This paper revisits the Element Equilibration Technique, and more precisely one of its key ingredient: the strong prolongation equation. This equation guides the construction of the SA-field by imposing that it develops the same virtual work as the FE solution within the FE displacement fields. By a global analysis, we manage to obtain, at a low cost, the full set of SA-fields which satisfy the strong prolongation equation.

This new way of interpreting the strong prolongation equation corresponds to forming a large sparse linear system. A first advantage is that this setting clearly separates the different roles star-patches can play: the left kernel corresponds to star-patches seen as support of shape functions, the right kernel corresponds to star-patches seen as closed fluxes. Moreover, the implementation is straightforward;  most operations are naturally vectorized and applied on blocks of data, and fast solvers can be employed because well-posed sparse problems can always be easily extracted. Various optimization criteria can be tested, which enables us to evaluate the pertinence of the strong prolongation equation. In particular we obtain an optimal estimator (but at prohibitive computational costs), and an estimator equivalent to the classical EET in term of quality but faster in term of CPU time.
%
The acronym of the method is \textsc{starfleet}, for star-free and lazy element equilibration technique, where we insist on the ease of implementation and on the fact that star-patches are not an ingredient but a natural feature of the method.

In order to ease the presentation, the method is developed for 2D linear elasticity problems approximated by first order triangular elements, for which numerical assessments are proposed. The extensions of the method are discussed at the end of the article. The rest of the paper is organized as follow: in Section~\ref{sec:setting} we define the problem and the notations, in Section~\ref{sec:EET} we recall the original EET technique, then we propose in Section~\ref{sec:STARFLEET} an alternative implementation of the strong prolongation equation which generalizes the classical EET and offers new  optimization possibilities. In order not to interrupt the presentation, the technical details on which the method relies are exposed in Section~\ref{sec:solving}. 
Assessments are given in Section~\ref{sec:assessment}. In Section~\ref{sec:extensions}, we show that the method is not limited to first degree triangles but can be straightforwardly extended to second degree triangular elements, quadrangular elements and 3D domains.

\section{Problem setting and error estimation}\label{sec:setting}

Let $\mathbb{R}^\dime$ ($\dime=2$) represents the physical space. 
Let us consider the static equilibrium  of a (polygonal) structure which occupies the open domain $\domain\subset\mathbb{R}^\dime$ and which is subjected to  given body force $\un{f}$ within $\Omega$,  to given
traction force~$\un{g}$ on~$\partial_g\Omega$  and to given displacement field~$\dep_d$ on the  complementary part  $\partial_u\Omega\neq\emptyset$. We  assume that the structure undergoes  small   perturbations  and  that  the  linear elastic material   is  characterized by Hooke's  elasticity tensor~$\hooke$.  Let~$\dep$ be  the unknown displacement field, $\strain{\dep}$ the symmetric part  of the gradient of~$\dep$, $\sig$ the Cauchy stress tensor.

We introduce two affine subspaces and one positive form:
\begin{itemize}
\item Affine subspace of kinematically admissible fields (KA-fields)
\begin{equation}\label{eq:KA}
  \KA=\left\{ \dep\in \left(\mathrm{H}^1(\domain)\right)^\dime,\ \dep = \dep_d \text{ on }\partial_u\domain \right\}
\end{equation}
and we note $\KAo$ the associated vectorial space.
\item Affine subspace of statically admissible fields (SA-fields)
\begin{multline}\label{eq:SA}
  \SA
  =\Bigg\lbrace   \uu{\tau}\in  \left(\mathrm{L}^2(\domain)\right)^{\dime\times \dime}_{\text{sym}}; \
    \forall  \depv \in  \KAo,\ \\ \int\limits_\domain
  \uu{\tau}:\strain{\depv} \,   d\domain    =    \int\limits_\domain   \un{f} \cdot\depv   \, d\domain +
  \int\limits_{\partial_g\domain} \un{g}\cdot\depv\, dS   \Bigg\rbrace
\end{multline}
\item Error in constitutive equation 
\begin{equation}\label{eq:ecr}
  \ecr{\dep,\sig}{\domain}= \enernorm{\sig-\hooke:\strain{\dep}}{\domain}
\end{equation}
where ${\enernorm{\uu{x}}{\domain}}=\displaystyle \sqrt{\int_\domain \left( \uu{x}: {\hooke}^{-1}:\uu{x} \right)d\domain}$
\end{itemize}

The mechanical problem set on $\Omega$ can be formulated as:
\begin{center}
  Find    $\left(\dep_{ex},\sig_{ex}\right)\in\KA\times\SA$    such    that
  $\ecr{\dep_{ex},\sig_{ex}}{\domain}=0$
\end{center}
The solution to this problem, named ``exact'' solution, exists and is unique.

\subsection{Finite element approximation}
{We consider the discretization of the geometry by triangles (meshing of the domain). To that subdivision},  we  associate a finite-dimensional  subspace  $\KAH$  of $\KA$.  The  classical finite element displacement approximation consists in searching:
\begin{equation}\label{eq:feeq}
  \begin{aligned}
    \depH&\in\KAH\\
    \sigH&=\hooke:\strain{\depH}   \\
    \int_{\domainH}
  \sigH:\strain{\depvH}   d\domain  &=   \int_{\domainH} \un{f}\cdot\depvH   d\domain   +
  \int_{\partial_g\domainH} \un{g}\cdot\depvH dS,{\ \forall \depvH\in\KAHo}
    \end{aligned}
\end{equation}
Of course the approximation is due to the fact that in most cases $\sigH\notin\SA$.

After introducing the matrix $\shapev$ of  shape functions which form a basis of $\KAH$  and the  vector of  nodal unknowns  $\depN$ so that $\depH=\shapev \depN$, the  classical finite  element method leads  to the linear
system:
\begin{equation}\label{eq:globalFE}
\stiff \depN = \force
\end{equation}
where $\stiff$ is  the (symmetric positive definite) stiffness  matrix and $\force$ is the vector of generalized forces. Classically, the finite element space is extended to degrees of freedom where Dirichlet boundary conditions are imposed, and the vector of nodal reactions $\lamN_d$, which correspond to the work of the finite element stress field in the shape functions, can be deduced:
\begin{equation}
\lamN_d= \left(\int_{\partial_u\domain} (\sigH \cdot \underline{n}) \cdot {\shapev}_d\,dS\right)^T = \stiff_{dr} \depN + \stiff_{dd}\depN_d -\force_d
\end{equation}
where Subscript $d$ stands for Dirichlet degrees of freedom (so that $\depN_d$ is known) and  Subscript $r$ corresponds to the remaining degrees of freedom which were determined in \eqref{eq:globalFE}, $\underline{n}$ is the normal vector to $\partial_u\domain$. 

\subsection{Principle of the error in constitutive equation}
The measurement of the exactness of the solution through the error in constitutive equation consists in deducing admissible fields from the computed FE fields and measuring the non-verification of the constitutive equation:
\begin{equation}
\left\{\begin{array}{l}
\depH \in \KAH\\
\sigH =\hooke:\strain{\depH} \\
\ecr{\depH,\sigH}{\domainH}=0
\end{array}\right.
\longrightarrow
\left\{\begin{array}{l}
\tu \in \KA\\
\ts \in \SA\\
\ecr{\tu,\ts}{\domainH}\geqslant 0
\end{array}\right.
\end{equation}
$\ecr{\tu,\ts}{\domainH}$ gives strict bounds of the strain and stress errors in the energy norm \citep{Lad75,Lad04}.

In the case we consider, the construction of $\tu\in\KAH$ is simple because $\KAH\subset\KA$ so that we choose $\tu=\depH$.  Building $\ts$ is a more complex task, for which many techniques have been proposed, among others the element equilibration technique  \citep{Lad83,Lad97} (EET), the flux-free technique \citep{Par06} and the star-patch element equilibration technique \citep{Lad10bis,Ple12}.
This paper revisits the EET technique, about which we give more details in Section~\ref{sec:EET}. 

Once $\ts\in\SA$ is built, the computation of the error is performed over each element $T$ as follow:
\begin{equation}\label{eq:computeerror1}
\begin{aligned}
\ecr{\depH,\ts}{\domainH} &= \sum_T \ecr{\depH,\ts}{T} \\
\ecr{\depH,\ts}{T}&=\frac{1}{2}\int\limits_T (\ts-\hooke:\strain{\depH}):\hooke^{-1}:(\ts-\hooke:\strain{\depH})d\Omega\\
&= \frac{1}{2}\int\limits_T  \ts:\hooke^{-1}:\ts d\Omega  + \frac{1}{2}\int\limits_T  \strain{\depH}:\hooke:\strain{\depH} d\Omega \\&\qquad\qquad\qquad\qquad\qquad  - \int\limits_T \ts:\strain{\depH} d\Omega
\end{aligned}
\end{equation}
where we recognize the deformation energy of the SA-field, the deformation energy of the KA-field and a coupling between the SA-field and the KA-field.

\subsection{Notations for the handling of the mesh}

We note $\setelem$  the set of elements (triangles) in the subdivision of $\domainH$, $\setedge$ the set of edges, and $\setvertex$ the set of vertexes. 
We note $\card{.} $ the cardinality of  sets (counting measure), so that for instance $\card{\setelem}$ is the number of elements in the mesh. 

We introduce the boundary operator $\bound$ which extracts the border edges of a group of elements and the ending vertexes of a set of edges (notation $\mathcal{P}(X)$ stands for the set of subsets of $X$). 
\begin{equation}
\mathcal{P}\left(\setelem\right) \overset{\bound}{\longrightarrow} \mathcal{P}\left(\setedge\right) \overset{\bound}{\longrightarrow}  \mathcal{P}\left(\setvertex\right)
\end{equation}

{We use the notation $\boundi$ for the mapping which gives the edges which are ended by one vertex and the elements which share one edge (when applied to sets,  $\boundi$ gives the union of the images of each term).  
In particular, for a vertex $V$ the elements in $\boundii V$ form the so called star-patch (set of elements having $V$ as a vertex). 

Figure~\ref{fig:partial} illustrates the boundary operator on simple cases.

\begin{figure}[ht]\centering
\includegraphics[width=.8\textwidth]{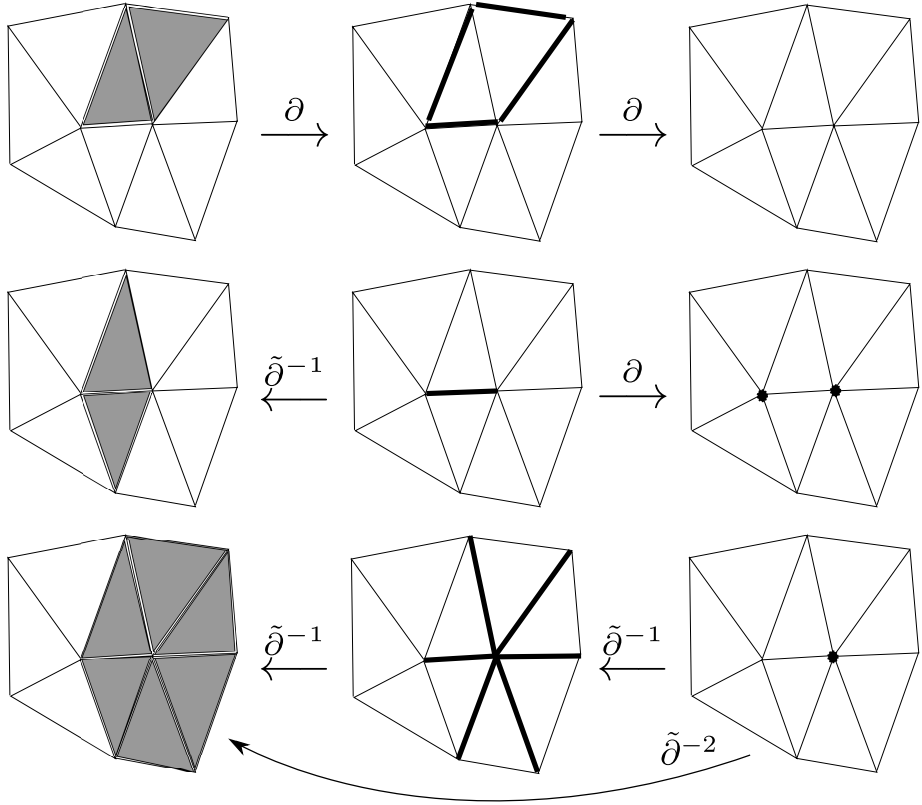}\caption{Illustration of the boundary operators}\label{fig:partial}
\end{figure}
\section{Classical EET technique}\label{sec:EET}
When building statically admissible stress fields, two conditions have to be met: the continuity of the fluxes between subregions and the verification of the local balance equation. In the Flux- Free technique, regularity is ensured by the use of the partition of unity and balance is satisfied through independent computations on star-patches. In the Element Equilibration Technique, the balance is verified by element-wise computations (leading to a family of element stresses $(\ts_T)$) whereas the continuity is ensured by the introduction of specific force unknowns on the edges between elements.

An extra criterion is introduced in order to limit the search space of statically admissible stress fields and simplify the determination of $(\ts_T)$, the so called strong-prolongation hypothesis:
\begin{equation}
\forall T \in \setelem,\ \forall \depv \in\KAH,\ \int_T (\ts-\sigH):\strain{\depv }d\Omega =0
\end{equation}
which means that the reconstructed stress field should develop the same amount of work than the original FE stress field in any FE deformation field. 

Moreover, this hypothesis enables to remove the coupling term in \eqref{eq:computeerror1}: 
\begin{equation}\label{eq:computeerror2}
 \begin{aligned}
&\int\limits_T\! \ts:\strain{\depH} d\Omega = \int\limits_T \sigH\!:\strain{\depH} d\Omega =\int\limits_T  \strain{\depH}:\hooke:\strain{\depH} d\Omega \\
&\Rightarrow \ecr{\depH,\ts}{T}= \frac{1}{2}\int\limits_T  \ts:\hooke^{-1}:\ts d\Omega  -\frac{1}{2}\int\limits_T  \strain{\depH}:\hooke:\strain{\depH} d\Omega
\end{aligned}
\end{equation}
Consequently, only the strain energy of the SA-field is required, and of course the smaller this energy the sharper the estimation. By decoupling the quantities from the original problem ($\depH$) and the dual problem ($\ts$), this expression avoids having to evaluate the primal field in the dual discretization.

The EET technique is a two-step procedure: first {(subsections \ref{ssec:introF}, \ref{ssec:classicF})} balanced tractions $\tF$ are constructed on the edges $(\Gamma)$ of elements, second {(subsection \ref{ssec:classicEc})} the SA-stress fields are computed independently on each element $(T)$ using the tractions as Neumann boundary conditions $\ts_T\cdot {\nga_T} =\delta_T^\Gamma\tF$ where ${\nga_T}$  is the  normal vector to face $\Gamma$ pointing outward from element $T$ and the role of $\delta_T^\Gamma=\pm 1$ is explained in Equation~\eqref{eq:delta}.

\subsection{Introduction of the balanced traction forces}\label{ssec:introF}

On each edge $\Gamma$, we wish to build a traction field $\tF$ which gives correct Neumann conditions for the computation of the statically admissible field $\ts$ which satisfies the strong prolongation condition.

\begin{equation}\label{eq:introF}\left\{
\begin{aligned}
&\ts_T\cdot{\nga_T}=\delta_T^\Gamma\tF,\ \forall T\in\setelem,\ \forall \Gamma \in \bound T  \\
&\tF = \un{g} \text{ on }\partial_g\domainH  \\
&0 =\int_T (\ts-\sigH):\strain{\depv }d\Omega  ,\ \forall \depv \in\KAH,
\end{aligned}\right.
\end{equation}
$\delta_T^\Gamma=\pm 1$ is used to ensure the action-reaction principle between two neighboring elements:
\begin{equation}\label{eq:delta}\left\{
\begin{aligned}
&\forall \Gamma \in  \partial\domainH,\ T=\boundi \Gamma,\ \delta_T^\Gamma = 1 \\
&\forall \Gamma \in \setedge\setminus \partial\domainH,\ \{T,T'\}=\boundi \Gamma,\ \delta_T^\Gamma + \delta_{T'}^\Gamma= 0 
\end{aligned}\right.
\end{equation}

An important property is that, because rigid body motions belong to FE field $\KAH$, the strong prolongation equation implies the equilibrium of elements with respect to rigid body motions: 
\begin{equation}
\forall T\in\setelem, \forall \un{\rho} \in\RBM,\ \int_T \un{f}\cdot\un{\rho} d\domain +  \sum_{\Gamma\in\bound T}\int_{\Gamma}\delta_T^\Gamma \tF\cdot\un{\rho}\, dS = 0
\end{equation}
where $\RBM$ is the set of translations and infinitesimal rotations. Then the strong prolongation condition ensures the verification of Fredholm alternative and $\ts$ is well defined {independently on elements}.

{If we develop the strong prolongation equation using the definition of tractions  \eqref{eq:introF} and the FE equilibrium \eqref{eq:feeq}, we obtain:}
\begin{equation}\label{eq:prolo_resid}
\begin{aligned}
&\forall T\in\setelem,{\ \forall \depv \in\KAH,}
\ \sum_{\Gamma\in\bound T} \delta_T^\Gamma \int_\Gamma \tF\cdot \depv  \,dS =\overset{\circ}{R}_T(\depv ) \\
&\text{where } \overset{\circ}{R}_T(\depv ) :{=} \int_T ( \sigH:\strain{\depv }-\underline{f}\cdot\depv ) d\Omega
\end{aligned}
\end{equation}
the internal residual $\overset{\circ}{R}_T(\depv )$ is then to be computed for each shape function on each element.

\subsection{Classical computation of the balanced edge tractions}\label{ssec:classicF}
In the original EET technique \citep{Lad83}, edge tractions $\tF$ are assumed to vary linearly, so that two coefficients per component are to be determined per edge. 

To determine these coefficients, the residual equation \eqref{eq:prolo_resid} is tested against shape functions so that computations are limited to the support of shape functions, called star-patches. The support of the shape function $\phi_H^N$  associated to the vertex $N\in\setvertex$ coincides with the star patch $\boundii N$ (see Figure~\ref{fig:partial}). 
We can compute the vector $\un{\hat{W}}_N^\Gamma$ corresponding to the work of edge tractions in the shape function (in each direction):
\begin{equation}
\un{\hat{W}}_N^\Gamma=\int_\Gamma \begin{pmatrix}\tF\cdot \un{e}_x {\phi}_H^N \\\tF\cdot \un{e}_y {\phi}_H^N \end{pmatrix}\,dS
\end{equation}
The strong prolongation equation can be rewritten as a set of decoupled small linear systems for each vertex of the mesh, the number of unknowns being the number of edges connected to that vertex. We have the following equation for each element of the star-patch:
\begin{equation}\label{eq:sysEET_SP}
{\text{for a given vertex }N}\ \left\{\begin{aligned}
&\forall T \in \boundii N,\text{ let } \bound T \cap \boundi N = \{\Gamma_1,\Gamma_2\},\\&  \delta_T^{\Gamma_1}\un{\hat{W}}_N^{\Gamma_1}+
\delta_T^{\Gamma_2}\un{\hat{W}}_N^{\Gamma_2}=\begin{pmatrix}
\overset{\circ}{R}_{T}(\phi_H^N \un{e}_x)\\
\overset{\circ}{R}_{T}(\phi_H^N \un{e}_y) \end{pmatrix}
\end{aligned}\right.
\end{equation}
{The linear equations above only involves the $(\un{\hat{W}}_N^{\Gamma_i})_i$ unknowns associated to vertex $N$ (and it is the unique system in which they appear).} 
The nature of the linear system depends on the position of the node {(see Figure~\ref{fig:patchesEET} for a simple illustration of the various cases)}. For internal nodes and nodes on the Dirichlet boundary, the resulting system of equations is under-determined, {an extra constraint is then introduced in order to determine} the components of the edge tractions radiating from the node (the constraint is detailed in Section~\ref{sec:StarfleetEET}). 

\begin{figure}
\centering
\includegraphics[width=.6\textwidth]{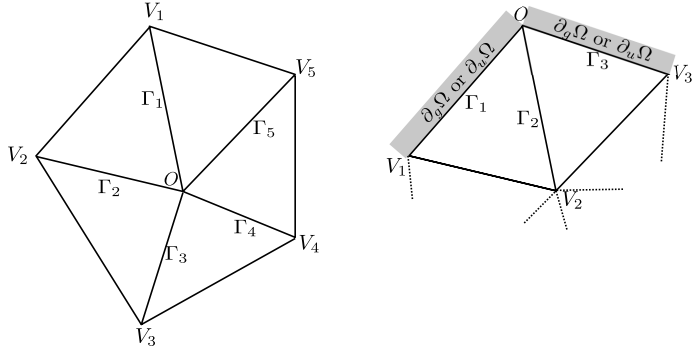}\caption{Different star-patch configurations to determine $\un{\hat{W}}_O^{\Gamma_i}$}\label{fig:patchesEET}
\end{figure}

Once those local systems have been solved, one has to compute the balanced traction forces from the fluxes  ${\hat{W}}_N^\Gamma$. For an edge $\Gamma$ of vertexes $l$ and $r$, the balanced traction forces are decomposed over the shape functions basis and determined by the inversion of the mass matrix of the edge:
\begin{equation}
 \tF =  \tF_l \phi_H^l + \tF_r \phi_H^r \text{ and } \begin{pmatrix}
\int_\Gamma \phi_H^l  \cdot \phi_H^l \,dS & \int_\Gamma \phi_H^r  \cdot \phi_H^l\, dS\\ 
\int_\Gamma \phi_H^r  \cdot \phi_H^l\, dS & \int_\Gamma \phi_H^r  \cdot \phi_H^r \,dS
 \end{pmatrix}\begin{pmatrix}\tF_l\\\tF_r\end{pmatrix}=\begin{pmatrix} \un{\hat{W}}_l^\Gamma\\\un{\hat{W}}_r^\Gamma\end{pmatrix}
\end{equation}

\subsection{Element estimation of the complementary energy}\label{ssec:classicEc}

Once $(\tF)$ have been determined, they can be used as Neumann loading on each element {to determine the statically admissible stress fields $\ts$ independently on each element \eqref{eq:introF}}. For simple cases of volume loading, the exact solution for the stress field can be obtained using a piecewise polynomial search space. {For the general cases, a numerical approximation of high precision must be obtained. A dual approach would directly give the desired stress field but classically, in order to keep using widespread tools, a refined finite element approximation in displacement is sought, characterized by Finite Element space $\KAp^T$.}
The precise FE space can be obtained by higher order approximation (p version), $3$ degrees higher than the original computation is known to be sufficient \citep{Bab94}, or by remeshing the elements (h version). 

{Then in order to determine $\sigp\simeq\ts$ on Element $T$, we seek the displacement  $\depp$ solution of the following system:}
\begin{equation}\label{eq:Eeq}\left\{\begin{aligned}
&\int_T \sigp:\strain{\un{\Phi}} d\domain -\left( \int_T \un{f}\cdot\un{\Phi} d\domain  + \sum_{\Gamma \in \bound T} \int_\Gamma \delta_T^\Gamma \tF \cdot \un{\Phi}\, dS \right) = 0,\ \forall \un{\Phi}\in\KAp^T\\
&\sigp=\hooke:\strain{\depp},\quad \depp \in \KAp^T
\end{aligned}\right.
\end{equation}

Note that if the space $\KAp^T$ was not sufficiently large then the energy would be
underestimated, leading to a erroneously low error estimation (the bound would not be strict). 

\section{New SA-field reconstruction technique}\label{sec:STARFLEET}
The driving ideas for our new SA-field reconstruction technique are:
\begin{itemize}
\item To simplify the implementation (and somehow accelerate the execution) by avoiding loops and tests, by vectorizing the code, by working on natural finite element containers (elements instead of star-patches) prone to small grain parallelism.
\item To separate the topological properties and the geometrical properties of the mesh to make best use of both.
\item To study all the consequences of the strong prolongation equation.
\item To derive the classical EET as a special case, propose new variants including one which makes minimal the error estimator $\ecr{\depH,\ts}{\domainH}$.
\end{itemize}\medskip

To do so,  we add the following ingredients to the two-step procedure:
\begin{itemize}
\item In order to define the Neumann conditions on the borders of elements:
\begin{itemize}
\item We assume no a priori shape for the edge tractions, and we only search for their work in a well chosen basis $\basis$ of $\KAH$ (Section~\ref{ssec:worknotforce}). 
\item We use all known data on the boundary (Neumann's given forces and Dirichlet's computed nodal reactions), so that the only unknowns are the works of tractions on the internal edges (Section~\ref{ssec:elimextern}).
\item We gather unknown works in multivectors\footnote{A multivector is a collection of vectors, or equivalently a thin matrix, prone to block treatment.} and we write the linear system they must be solution to:
\begin{equation}
\matG \,\vecW\!(\basis) = \tvecR\!(\basis)
\end{equation}
$\matG$ contains topological data related to the strong prolongation equation and geometrical information for the unknown works to respect linearity principle. Multivector~$\tvecR(\basis)$ stands for balance residuals  computed on the elements, Multivector~$\vecW(\basis)$ stands for the works computed on the edges (Section~\ref{sec:proloG}).
\item We find one solution $\vecW_0(\basis)$ to previous system (Section~\ref{sec:solG}) and a basis $\kerG$ of $\operatorname{ker}(\matG)$ (Section~\ref{sec:kerG}), so that for any $\gamma$,  $\vecW\!(\basis)= \vecW_0(\basis) + \kerG \gamma$ satisfies the strong prolongation. 
\item We choose one criterion and search for optimal $\gamma$ (Section~\ref{sec:optimW}).
\end{itemize}
\item For the element estimation of the complementary energy:
\begin{itemize}
\item We use a higher degree basis $\basis^2$ and we compute the associated works of Neumann boundary conditions (Section~\ref{sec:highprec}).
\end{itemize}
\end{itemize}

\subsection{Choice of the unknowns and associated constraints}
\label{ssec:worknotforce}
The first point of the method is that no a priori shape is assumed for the edge tractions (whereas they are searched for as affine functions in the classical EET). Indeed the strong prolongation equation \eqref{eq:prolo_resid} only imposes linear constraints on their work in kinematically admissible fields. We then search for works $(\hat{W}^\Gamma(\un{v}))$ for all edges $\Gamma$ and all $\un{v}\in\KAH$
\begin{equation}
\hat{W}^\Gamma(\un{v}) =\int_\Gamma \tF\cdot\un{v} \,dS
\end{equation}
For practical computation, a basis of $\KAH$ must be chosen. In the classical EET, shape (hat) functions are chosen. Here, in order to avoid the tedious forming of star-patches and enable systematic treatment, we use the canonical basis $\basis=(1\un{e}_x,x\un{e}_x,y\un{e}_x,1\un{e}_y,x\un{e}_y,y\un{e}_y)$ on each element which is indeed independent of the geometry of the mesh. 

Note that the unknown works are not independent. The works being linear forms on $\KAH$, the variables $x$ and $y$ being affinely linked on an edge, we must have for an arbitrary direction $\un{e}$:
\begin{equation}
\begin{aligned}
a_\Gamma \hat{W}^\Gamma(x\un{e})+b_\Gamma \hat{W}^\Gamma(y\un{e})+c_\Gamma\hat{W}^\Gamma(\un{e}) =\hat{W}^\Gamma \left((a_\Gamma   x+ b_\Gamma y + c_\Gamma)\un{e} \right)=0 
 \end{aligned}
\end{equation}
where $(a_\Gamma,b_\Gamma,c_\Gamma)$ are the  coefficients of the equation of Edge~$\Gamma$:
\begin{equation*}
 (x,y) \in\Gamma \Longleftrightarrow\ a_\Gamma x + b_\Gamma y + c_\Gamma = 0   
\end{equation*}
In matrix form, this constraint writes:
\begin{equation}\label{eq:geomconsist}
\begin{pmatrix}
c_\Gamma & a_\Gamma & b_\Gamma 
\end{pmatrix} 
\begin{pmatrix}
 \hat{W}^\Gamma(\un{e}_x) &  \hat{W}^\Gamma(\un{e}_y)\\
 \hat{W}^\Gamma(x\un{e}_x) &  \hat{W}^\Gamma(x\un{e}_y)\\
\hat{W}^\Gamma(y\un{e}_x) &  \hat{W}^\Gamma(y\un{e}_y)
\end{pmatrix} = \begin{pmatrix}
0 & 0 
\end{pmatrix} 
\end{equation}
where we see that the uncoupling of the directions $\un{e}_x$ and $\un{e}_y$ leads to working with a multivector.

\begin{prop}\label{prop:lineartraction}
The necessary condition~\eqref{eq:geomconsist} is sufficient to build a traction force $\tF\cdot\un{e}$ associated with the works $(\hat{W}^\Gamma(\un{e}),\hat{W}^\Gamma(x\un{e}),\hat{W}^\Gamma(y\un{e}))$. 
\end{prop}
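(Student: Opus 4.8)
The plan is to argue edge by edge. Fix an edge $\Gamma$ and a direction $\un{e}\in\{\un{e}_x,\un{e}_y\}$, and let $\mathcal{A}_\Gamma$ denote the two-dimensional vector space of affine scalar functions on $\Gamma$. I will look for the traction density $t=\tF\cdot\un{e}$ inside $\mathcal{A}_\Gamma$ (the affine ansatz of the classical EET; any other extension off $\mathcal{A}_\Gamma$ would do equally well). The key object is the linear map
\[
\Phi:\ t\in\mathcal{A}_\Gamma\ \longmapsto\ \Bigl(\textstyle\int_\Gamma t\,dS,\ \int_\Gamma t\,x\,dS,\ \int_\Gamma t\,y\,dS\Bigr)\in\R^3 ,
\]
and the claim is that its image is exactly the plane $\{(W_0,W_1,W_2):c_\Gamma W_0+a_\Gamma W_1+b_\Gamma W_2=0\}$ cut out by~\eqref{eq:geomconsist}. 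Granting this, any triple $(\hat{W}^\Gamma(\un{e}),\hat{W}^\Gamma(x\un{e}),\hat{W}^\Gamma(y\un{e}))$ satisfying the consistency relation equals $\Phi(t)$ for some affine $t$, which is precisely the assertion of the proposition.

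To prove the claim I would establish three elementary facts. First, $\operatorname{Im}\Phi$ lies in that plane: for $t\in\mathcal{A}_\Gamma$ one has $\int_\Gamma t\,(a_\Gamma x+b_\Gamma y+c_\Gamma)\,dS=0$ since $a_\Gamma x+b_\Gamma y+c_\Gamma$ vanishes identically on $\Gamma$ by definition of the edge equation; expanding gives $c_\Gamma\!\int_\Gamma t+a_\Gamma\!\int_\Gamma t x+b_\Gamma\!\int_\Gamma t y=0$. Second, $\Phi$ is injective: its components are the $L^2(\Gamma)$ pairings of $t$ with $1$, $x$, $y$, whose restrictions to $\Gamma$ span $\mathcal{A}_\Gamma$ (a nondegenerate segment carries non-constant affine functions), so $\Phi(t)=0$ forces $\int_\Gamma t v\,dS=0$ for every $v\in\mathcal{A}_\Gamma$, in particular $v=t$, hence $\int_\Gamma t^2\,dS=0$ and $t=0$; thus $\dim\operatorname{Im}\Phi=\dim\mathcal{A}_\Gamma=2$. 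Third, the target plane is genuinely two-dimensional because $(c_\Gamma,a_\Gamma,b_\Gamma)\neq\un{0}$, being the equation of a line. A two-dimensional subspace contained in a two-dimensional space coincides with it, which closes the argument.

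If an explicit construction is preferred, I would instead write $t=t_l\phi_H^l+t_r\phi_H^r$ over the edge hat functions, note that the edge mass matrix used in the EET is the Gram matrix of the linearly independent $\phi_H^l,\phi_H^r$ in $L^2(\Gamma)$, hence symmetric positive definite and invertible, and observe that its right-hand sides $\int_\Gamma t\,\phi_H^l\,dS$ and $\int_\Gamma t\,\phi_H^r\,dS$ are recovered from $(\hat{W}^\Gamma(\un{e}),\hat{W}^\Gamma(x\un{e}),\hat{W}^\Gamma(y\un{e}))$ by any linear combination expressing $\phi_H^l,\phi_H^r$ in terms of $1,x,y$ on $\Gamma$ — the outcome being independent of that (non-unique) choice precisely because two such combinations differ by a multiple of $(c_\Gamma,a_\Gamma,b_\Gamma)$, which~\eqref{eq:geomconsist} annihilates. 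Either route is routine; the only point needing care is the bookkeeping that $\{1,x,y\}$ restricted to an edge collapses onto the two-dimensional $\mathcal{A}_\Gamma$, which is exactly what makes~\eqref{eq:geomconsist} the single compatibility condition rather than an over- or under-determined one. I do not anticipate a genuine obstacle here: the proposition is a finite-dimensional rank statement.
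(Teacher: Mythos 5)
Your argument is correct; it establishes the same fact by a rank/dimension count where the paper proceeds by explicit construction. The paper parametrizes $\Gamma$ by the curvilinear abscissa $s\in[-1/2,1/2]$ centered at the midpoint $O$, posits an affine traction $\tF\cdot\un{e}=F^\Gamma_0+sF^\Gamma_1$, and integrates to get closed-form coefficients, namely $\meas(\Gamma)F^\Gamma_0=\hat{W}^\Gamma(\un{e})$ and $\tfrac{\meas(\Gamma)^3}{12}F^\Gamma_1=a^\Gamma\hat{W}^\Gamma(y\un{e})-b^\Gamma\hat{W}^\Gamma(x\un{e})-(a^\Gamma y_O-b^\Gamma x_O)\hat{W}^\Gamma(\un{e})$; condition~\eqref{eq:geomconsist} is then exactly what guarantees the third, remaining combination of works is matched automatically --- a point the paper leaves implicit and which your ``image contained in the plane, both two-dimensional'' argument makes transparent. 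What your route does not deliver is the formulas themselves, and in this paper they are not a throwaway: the explicit affine traction determined by $(F^\Gamma_0,F^\Gamma_1)$ is reused in Section~\ref{sec:highprec} (Equation~\eqref{eq:hdW}) to extrapolate the second-degree works feeding the element problems, so the constructive proof does double duty. Your fallback via the edge mass matrix is essentially the paper's computation in the nodal rather than the midpoint basis (the Gram matrix of $\{1,s\}$ is diagonal, which is why the paper's formulas decouple), and your remark that the answer is independent of how one expresses $\phi_H^l,\phi_H^r$ in terms of $1,x,y$ modulo $(c_\Gamma,a_\Gamma,b_\Gamma)$ is a clean way to articulate the role of the compatibility condition. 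No gap in either route.
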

\begin{proof} {An infinity of traction forces can be associated with the given works satisfying~\eqref{eq:geomconsist}, among them we prove that there exists one linear traction force ; other traction fields could be obtained by adding a term  orthogonal to the subspace of first degree polynomials in the $L^2(\Gamma)$ sense.}

Let us consider the edge $\Gamma$ ended by Vertexes $A$ and $B$ of coordinates $(x_A,y_A)$ and $(x_B,y_B)$. We compute the coefficient of the line and its middle~$O$:
\begin{equation}
\begin{aligned}
&a^\Gamma = y_B - y_A \\
&b^\Gamma = x_A - x_B \\
&c^\Gamma = x_By_A - y_Bx_A
\end{aligned}\qquad 
\begin{aligned}
&x_O=\frac{x_A + x_B}{2}\\
&y_O=\frac{y_A + y_B}{2}\\
\end{aligned}
\end{equation}
We introduce the curvilinear abscissa $s\in[-1/2,1/2]$, we then have:
\begin{equation}
x=x_O-s b^\Gamma,\quad y=y_O+s a^\Gamma\quad\text{and}\quad \,dS= \meas(\Gamma)ds
\end{equation}
If we search for a linear traction force in direction $\un{e}$, parametrized by coefficients $F^\Gamma_0$ and $F^\Gamma_1$ such that $\tF\cdot\un{e}=F^\Gamma_0+s F^\Gamma_1$. A simple integration over the edge shows that:
\begin{equation}
\begin{aligned}
\meas(\Gamma) F^\Gamma_0 &= \hat{W}^\Gamma(\un{e})\\
\frac{\meas(\Gamma)^3}{12} F^\Gamma_1&= a^\Gamma \hat{W}^\Gamma(y\un{e})-b^\Gamma \hat{W}^\Gamma(x\un{e})-(a^\Gamma y_O - b^\Gamma x_O)\hat{W}^\Gamma(\un{e})
\end{aligned}
\end{equation}
\end{proof}

\subsection{Elimination of external edges, definition of the right-hand-side}
\label{ssec:elimextern}

A great interest of using works instead of tractions is that they are fully known on the border edges: even on Dirichlet edges they can be deduced from the nodal reactions on Dirichlet vertexes. 
These data can then be incorporated in the residual equation \eqref{eq:prolo_resid}, so that only works on internal edges are unknown. 

For a test field $\un{v}_H\in \KA_H$, we define the complete residual $R_T(\un{v}_H)$:
\begin{itemize}
\item If Element $T$ is an internal element then $R_T(\un{v}_H)=\overset{\circ}{R}_T(\un{v}_H)$
\item If Element $T$ is connected to one\footnote{\label{note:oneedge}Elements shall have at most one edge on the border of the mesh, see remark~\ref{rem:meshborder}.} Neumann edge $\Gamma_g$, then we set:
\begin{equation}
R_T(\un{v}_H) = \overset{\circ}{R}_T(\un{v}_H) - \int_{\Gamma_g} \un{g}\cdot \un{v}_H \, dS
\end{equation}
\item If Element $T$ is connected to one\textsuperscript{\ref{note:oneedge}} Dirichlet edge $\Gamma_d$, we use the computed nodal reactions:
\begin{equation}
R_T(\un{v}_H) = \overset{\circ}{R}_T(\un{v}_H) -  \sum_{V\in\bound \Gamma_d} \alpha_V^{\Gamma_d} \un{\lamN}_d(V) \cdot \un{v}_H(V)
\end{equation}
$V$ represents the vertexes of $\Gamma_d$, $\un{\lamN}_d(V)$ is the associated nodal value of the reaction, which is known from the FE resolution, and $\un{v}_H(V)$ is the value of the test field at the vertexes. $\alpha_V^\Gamma$ is a partition of the reaction force between the Dirichlet edges the vertex $V$ belongs to (see Figure~\ref{fig:dirichlet} for an illustration):
\begin{equation}
\forall V\in \partial_u\domainH,\ \sum_{\Gamma\in\boundi V\cap \partial_u\Omega} \alpha_V^\Gamma = 1
\end{equation}
Typically $\alpha_V^\Gamma$ can be set to $1/2$ for nodes inside Dirichlet boundary and $1$ for nodes on the extremities of Dirichlet boundary. Another possibility is to set $\alpha_V^\Gamma$ according to the length of the edges connected to Vertex $V$ (of course for regular meshes these approaches are equivalent). A more sophisticated technique would be to choose $\alpha_V^\Gamma$ in order to minimize a distance between an equivalent traction and $\sig_H\cdot\nga$ post-processed from the finite element computation.
\end{itemize}
\begin{figure}[ht]\centering
\includegraphics[width=0.9\textwidth]{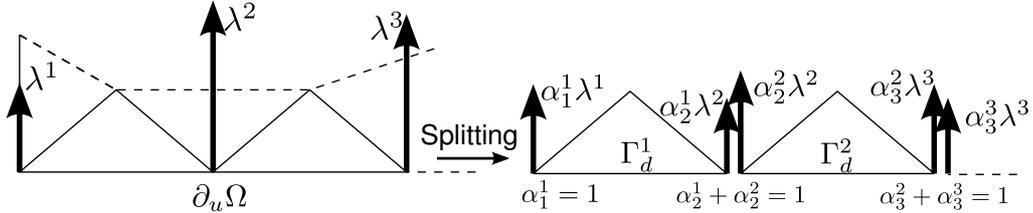}\caption{Splitting of reactions between adjacent elements}\label{fig:dirichlet}
\end{figure}

\subsection{Strong prolongation equation at the global scale}
\label{sec:proloG}

The strong prolongation equation~\eqref{eq:prolo_resid} can be rewritten using only internal edges works as unknowns:
\begin{equation}
\begin{aligned}\label{eq:prolo_resid_int}
\forall T\in\setelem,\ \forall \un{v}_H\in\KA_H,\ \sum_{\Gamma\in\bound T\cap\setedgei} \delta_T^\Gamma W^\Gamma(\un{v}_H) = R_T(\un{v}_H)
\end{aligned}
\end{equation}
where $\setedgei$ is the set of internal edges (edges separating two elements).

Equation~\eqref{eq:prolo_resid_int} writes in matrix form:
\begin{equation}\label{eq:strongprologlo}
\forall \un{v}_H\in\KA_H,\ \matD \vecW(\un{v}_H) = \vecR(\un{v}_H)
\end{equation}
where 
\begin{equation*}
\begin{aligned}
&\vecW(\un{v})= \begin{pmatrix} \vdots \\ W^\Gamma(\underline{v}) \\\vdots\end{pmatrix} \in\R^{|\setedgei|}
\quad,\quad
\vecR(\un{v})= \begin{pmatrix} \vdots \\ R_T(\underline{v}) \\\vdots\end{pmatrix} \in\R^{|\setelem|}\\
&  \text{and } \matD \text{ is the } |\setelem|\times|\setedgei| \text{ matrix of coefficients } (\delta_T^\Gamma)
\end{aligned}
\end{equation*}

Using the canonical basis\footnote{In fact, a translation and a scaling of the basis can be interesting to improve conditioning. Typically the test functions shall span $[-1,1]$ when describing the whole domain. This corresponds to well choosing the origin and the scale of the frame.} on each element in order to span $\KA_H$, the constraints~\eqref{eq:geomconsist} and the strong prolongation equation~\eqref{eq:strongprologlo} write:
\begin{equation}\label{eq:todo1}
\begin{pmatrix}
\matD & &   \\
& \matD &   \\
& & \matD   \\
\matc & \mata  & \matb 
\end{pmatrix}
\begin{pmatrix}
\vecW(\un{e}_x) &  \vecW(\un{e}_y)\\\vecW(x\un{e}_x) & \vecW(x\un{e}_y)\\ \vecW(y\un{e}_x) &   \vecW(y\un{e}_y)
\end{pmatrix}=
\begin{pmatrix}\vecR(\un{e}_x)&\vecR(\un{e}_y) \\ \vecR(x\un{e}_x)& \vecR(x\un{e}_y) \\ \vecR(y\un{e}_x)& \vecR(y\un{e}_y) \\ 0 & 0
\end{pmatrix}
\end{equation}
where $(\matc,\mata,\matb)$ are the diagonal matrices containing the geometrical coefficients $(c^\Gamma,a^\Gamma,b^\Gamma)_{\Gamma\in\setedgei}$ of the condition~\eqref{eq:geomconsist}.

This system is $(3\card{\setelem}+\card{\setedgei})\times 3\card{\setedgei}$ large but it is extremely sparse, in fact it is never assembled, and it possesses many useful properties for fast solving.

In order not to interrupt the presentation, the study and the resolution of the system are explained in Section~\ref{sec:solving}. The main result is that the system has many solutions, so that a criterion can be added to choose one solution; we propose various criteria, among others one which recovers the classical EET estimator and one which minimizes the error estimation~\eqref{eq:computeerror2}.

\subsection{Element estimation of the complementary energy}\label{sec:highprec}
We note $\left(\vecW(f\un{e})\right)$ ($f\in\{1,x,y\}$ and $\un{e}\in\{\un{e}_x,\un{e}_y\}$) the chosen solution to System~\eqref{eq:todo1}. In order to compute the strain energy inside elements, we need to solve with high precision the following system on each element~$T$:
\begin{equation}\label{eq:hdelem}
\begin{aligned}&\text{Find }\dep_p\in\KAp^T,\text{ such that }  \forall \depv_p\in\KAp^T,\\
& \int_T \strain{\dep_p}:\hooke:\strain{\depv_p} d\Omega= \int_T \un{f} \cdot \depv_p d\Omega+\sum_{\Gamma\in\bound T} \delta_T^\Gamma W^\Gamma(\depv_p) 
\end{aligned}
\end{equation}

We propose to use a p-refinement strategy to define $\KAp^T$, and in practice to use a higher degree canonical basis, for instance for order~2:
\begin{equation}
\mathcal{B}^2=(x\un{e}_x, y\un{e}_y, x\un{e}_y+y\un{e}_x, xy\un{e}_x, xy\un{e}_y, x^2\un{e}_x, x^2\un{e}_y, y^2\un{e}_x, y^2\un{e}_y)
\end{equation}
\begin{remark}
The infinitesimal rigid body motions $(1\un{e}_x,1\un{e}_y,x\un{e}_y-y\un{e}_x)$ have been omitted, so that the element problems are directly well posed.
\end{remark}

First degree edge works, associated with test fields $(x\un{e}_x, y\un{e}_y, x\un{e}_y+y\un{e}_x)$, are known from~\eqref{eq:todo1}.
 We need an extrapolation strategy for the determination of the higher degree works, associated with $(xy, x^2, y^2)$ in both directions. These higher degree works need to remain consistent with the geometry and the first order works, typically we must have, for any edge $\Gamma$ and direction $\un{e}$:
\begin{equation}
\begin{aligned}
&a_\Gamma W^\Gamma(x^2\un{e}) +b_\Gamma W^\Gamma(xy\un{e})+c_\Gamma W^\Gamma(x\un{e}) = 0  \\
&a_\Gamma W^\Gamma(xy\un{e}) +b_\Gamma W^\Gamma(y^2\un{e})+c_\Gamma W^\Gamma(y\un{e}) = 0
\end{aligned}
\end{equation}
 
The simplest solution to build these higher degree works is to suppose they are associated with a linear distribution of traction force per edge. As proved in Proposition~\ref{prop:lineartraction}, the linear traction is uniquely determined by the first degree works, and using the notations of that proposition, we have for degree 2:
\begin{equation}\label{eq:hdW}
\begin{aligned}
\vecW(x^2\un{e})&= \left(\frac{\matb^2}{12}-\matx_O^2\right)\vecW(1\un{e})+2\matx_O\vecW(x\un{e})\\
\vecW(xy\un{e})&=\left(-\frac{\mata\matb}{12}-\matx_O\maty_O\right)\vecW(1\un{e})+\matx_O\vecW(y\un{e})+\maty_O\vecW(x\un{e})\\
\vecW(y^2\un{e})&=\left(\frac{\mata^2}{12}-\maty_O^2\right)\vecW(1\un{e})+2\maty_O\vecW(y\un{e})
\end{aligned}
\end{equation}
where $(\matx_O,\maty_O)$ are the diagonal matrices containing the coordinates of the middle of internal edges. As the vector notation shows, the right hand side of all elements computations are obtained at the same time. After that, the high degree element problems~\eqref{eq:hdelem}, and the contribution to the error estimator~\eqref{eq:computeerror2} can be computed in parallel (typically on multiple cores).

\section{Solution strategy}\label{sec:solving}
In this section, we present how System~\eqref{eq:todo1} is solved, and how its solution can be optimized. We use the following notations:
\begin{equation}
\matG=\begin{pmatrix}
\matD & &   \\
& \matD &   \\
& & \matD   \\
\matc & \mata  & \matb 
\end{pmatrix} \quad\text{and}\quad \vecW(\basis)=\begin{pmatrix}
\vecW(\un{e}_x) &  \vecW(\un{e}_y)\\\vecW(x\un{e}_x) & \vecW(x\un{e}_y)\\ \vecW(y\un{e}_x) &   \vecW(y\un{e}_y)
\end{pmatrix}  
\end{equation}

We recall that $\setedgei$ is the set of internal edges (edges separating two elements) which we oppose to border edges $\setedgee$. $\setvertexi$ and $\setvertexe$ are respectfully the sets  of internal and boundary vertexes. Of course $\card{\setedge}=\card{\setedgei}+\card{\setedgee}$ and $\card{\setvertex}=\card{\setvertexi}+\card{\setvertexe}$.
\begin{remark}\label{rem:meshborder}
In order to be a part of a valid mesh for finite element approximation, an element can not have two edges on the skin of the domain \citep{george91}, which implies that  $\card{\setedgee}=\card{\setvertexe}$. 
\end{remark}

We also recall that elements, edges and vertexes satisfy the Euler identity:
\begin{equation}
\card{\setelem}-\card{\setedge}+\card{\setvertex}=c-h
\end{equation}
where $c$ is the number of connected components and $h$ is the number of holes within the structure. In the following, we will be working on one single connected component so that $c=1$.

\subsection{Properties of Matrix $\matD$}\label{sec:studyD}

{The matrix $\matD$ which gathers the coefficients $(\delta_T^\Gamma)$ is} rectangular with $\card{\setelem}$ rows and $\card{\setedgei}$ columns.  It is a very sparse signed boolean matrix, since a row contains at most 3 non-zero coefficients (2 for a row associated with an element on the boundary), and each column contains exactly 2 non-zero coefficients (with opposite sign if all elements have the same orientation). 

This matrix describes how elements are connected by their edges, in algebraic topology it is called an incidence matrix.

\begin{remark}The coefficients $(\delta_T^\Gamma)$ need to be chosen in agreement with the orientation of elements and edges. For 2D meshes, it simply consists in giving edges an initial and a final vertex, and giving triangles a rotation sense (clockwise or counter-clockwise); if the orientations match then coefficient $\delta_T^\Gamma$ is $+1$. For the future developments we will assume that all elements are given the same orientation, this is not a necessity, it will only simplify the writing of the vector $\veci$ in Equation~\eqref{eq:leftker}. Note that orientation is a topological property which can be determined by the sole analysis of the ordering of the list of vertexes given to describe the edges and the elements, but it can be helpful to use geometrical computations (like triple products) to determine it (in particular in 3D).
\end{remark}

\begin{prop}
Matrix $\matD$ has more columns than rows and it is not full row-ranked.
Anyhow, the problem \eqref{eq:strongprologlo} remains well posed.
\end{prop}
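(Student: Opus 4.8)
The plan is to prove the three assertions one by one, using only Euler's identity, Remark~\ref{rem:meshborder}, the normalisation~\eqref{eq:delta} of the coefficients $\delta_T^\Gamma$, and the finite element equilibrium. For the shape of $\matD$ I would combine Euler's identity $\card{\setelem}-\card{\setedge}+\card{\setvertex}=1-h$ (a single connected component being assumed) with the splittings $\card{\setedge}=\card{\setedgei}+\card{\setedgee}$, $\card{\setvertex}=\card{\setvertexi}+\card{\setvertexe}$ and with the equality $\card{\setedgee}=\card{\setvertexe}$ of Remark~\ref{rem:meshborder}; the boundary terms cancel and one is left with $\card{\setedgei}=\card{\setelem}+\card{\setvertexi}+h-1$, hence $\card{\setedgei}-\card{\setelem}=\card{\setvertexi}+h-1$. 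This is strictly positive as soon as there are at least two internal vertices (or one internal vertex and a hole, and so on), which is the case for any mesh used in practice; thus $\matD$ has strictly more columns than rows.

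For the rank deficiency I would argue on the column structure of $\matD$. Each column is indexed by an internal edge $\Gamma$, which by definition separates exactly two elements $T$ and $T'$, and its only nonzero entries are $\delta_T^\Gamma$ and $\delta_{T'}^\Gamma$; by~\eqref{eq:delta} they sum to zero. Hence, writing $\mathbf{1}$ for the vector of $\R^{\card{\setelem}}$ with all entries equal to $1$, we get $\mathbf{1}^T\matD=\matO$, so $\operatorname{rank}\matD\leqslant\card{\setelem}-1<\card{\setelem}$ and $\matD$ is not full row-ranked. To pin the rank down exactly — which will be needed in the last step — I would note that $\matD^T\maty=\matO$ imposes $y_T=y_{T'}$ whenever $T$ and $T'$ share an internal edge; since the mesh, hence the dual graph whose nodes are the elements and whose links are the internal edges, is connected, $\maty$ must be a multiple of $\mathbf{1}$. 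Therefore $\operatorname{ker}\matD^T=\operatorname{span}(\mathbf{1})$ and $\operatorname{rank}\matD=\card{\setelem}-1$.

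For the well-posedness of~\eqref{eq:strongprologlo} — which here means solvability of $\matD\,\vecW(\un{v}_H)=\vecR(\un{v}_H)$, uniqueness being deliberately not expected since the solution set, of dimension $\card{\setvertexi}+h$, is narrowed afterwards by an optimality criterion — the system is consistent if and only if $\vecR(\un{v}_H)\in\operatorname{range}\matD=(\operatorname{ker}\matD^T)^\perp=\{\mathbf{1}\}^\perp$, that is, if and only if $\sum_{T\in\setelem}R_T(\un{v}_H)=0$. I would evaluate this sum directly from the definition of the complete residual: the internal residuals add up to $\int_\Omega(\sigH:\strain{\un{v}_H}-\un{f}\cdot\un{v}_H)\,d\Omega$; the Neumann corrections, each borne by a single element, tile $\partial_g\Omega$ and add up to $\int_{\partial_g\Omega}\un{g}\cdot\un{v}_H\,dS$; and the Dirichlet corrections, using that a boundary edge belongs to a single element and that $\sum_\Gamma\alpha_V^\Gamma=1$ at each Dirichlet vertex $V$, add up to $\sum_{V\in\partial_u\Omega}\un{\lamN}_d(V)\cdot\un{v}_H(V)$. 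Hence $\sum_T R_T(\un{v}_H)=\int_\Omega(\sigH:\strain{\un{v}_H}-\un{f}\cdot\un{v}_H)\,d\Omega-\int_{\partial_g\Omega}\un{g}\cdot\un{v}_H\,dS-\sum_{V\in\partial_u\Omega}\un{\lamN}_d(V)\cdot\un{v}_H(V)$, which vanishes by the finite element equilibrium~\eqref{eq:feeq} extended to the whole space $\KAH$ by means of the computed nodal reactions $\lamN_d$ (the identity $\int_\Omega\sigH:\strain{\un{v}_H}\,d\Omega=\int_\Omega\un{f}\cdot\un{v}_H\,d\Omega+\int_{\partial_g\Omega}\un{g}\cdot\un{v}_H\,dS+\sum_{V\in\partial_u\Omega}\un{\lamN}_d(V)\cdot\un{v}_H(V)$ holding for every $\un{v}_H\in\KAH$). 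Since the canonical test fields of the basis $\basis$ are globally affine, hence belong to $\KAH$, this covers every right-hand side that appears in~\eqref{eq:todo1}, and the problem is solvable.

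I expect the third step to be the delicate point: one must carefully track all the boundary contributions in the complete residual — in particular the Dirichlet reaction term together with its partition coefficients $\alpha_V^\Gamma$ — and recognise that their element-wise sum is exactly cancelled by the ``extended'' finite element equilibrium (the one keeping the computed nodal reactions). It is also essential there that the compatibility condition reads precisely $\mathbf{1}^T\vecR(\un{v}_H)=0$, which is why the left kernel of $\matD$ has to be identified exactly in the second step; and this rests in turn on the connectedness of the dual graph of the mesh, a property enjoyed by any admissible finite element mesh of a connected domain.
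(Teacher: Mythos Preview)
Your argument is correct and follows the same route as the paper: identify the all-ones vector $\veci$ in the left kernel of $\matD$ from the column structure dictated by~\eqref{eq:delta}, and then check that $\veci^T\vecR(\un{v}_H)=0$ is precisely the global finite element equilibrium. You are in fact more thorough than the paper's own proof, which neither derives the inequality $\card{\setedgei}>\card{\setelem}$ from Euler's identity (this appears only implicitly in the next proposition), nor invokes connectedness of the dual graph to pin down $\ker\matD^T$ exactly, nor writes out the Dirichlet reaction contribution in $\sum_T R_T(\un{v}_H)$ --- it simply records the Neumann term and appeals to~\eqref{eq:feeq}.
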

\begin{proof}
Assuming all elements have the same orientation, all internal edges are followed positively by one element and negatively by the other, the left kernel is then constituted by the vector $\veci=\begin{pmatrix} 1&\ldots&1\end{pmatrix}^T$.

The problem \eqref{eq:strongprologlo} remains well posed because the finite element equilibrium over the whole domain is equivalent to the orthogonality between the right hand side and the left kernel:
\begin{equation}\label{eq:leftker}
\begin{aligned}
\veci^T\matD&=0 \\
\veci^T\vecR(\un{v}_H) &= \sum_T R_T(\un{v}_H) = \int\limits_{\Omega} (\sigH:\strain{\un{v}_H}-\un{f}\cdot\un{v}_H ) d\Omega - \int\limits_{\partial_g\Omega}  \un{g}\cdot\un{v}_H dS = 0
\end{aligned}
\end{equation}
\end{proof}

\begin{prop} The dimension of the right kernel of $\matD$ is  $\card{\setvertexi}+h$ and a basis $\matN$ can be computed from the analysis of the mesh; moreover we have $\matN=[\matN_V,\matN_h]$, where $\matN_V$ is associated with internal vertexes and $\matN_h$ is associated with holes.
\end{prop}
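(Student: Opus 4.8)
The plan is to read $\matD$ as the signed vertex--edge incidence matrix of the \emph{dual graph} $G^\star$ of the mesh: its vertices are the elements of $\setelem$, its edges are the internal edges $\setedgei$ (a dual edge $\Gamma$ joins the two elements sharing $\Gamma$), and the entry $\delta_T^\Gamma$ records this incidence with a sign fixed by the orientations. With this reading, $\ker\matD$ is precisely the cycle space of $G^\star$ (internal-edge fluxes that are balanced on every element), and the standard cycle-space identity gives $\dim\ker\matD=\card{\setedgei}-\card{\setelem}+c^\star$, where $c^\star$ is the number of connected components of $G^\star$. For a valid connected mesh $G^\star$ is connected, so $c^\star=1$; equivalently, by the previous Property the left kernel of $\matD$ is spanned by $\veci$ and is one-dimensional, so $\operatorname{rank}\matD=\card{\setelem}-1$.

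I would then convert this into the announced count by Euler bookkeeping. Writing $\card{\setedge}=\card{\setedgei}+\card{\setedgee}$, $\card{\setvertex}=\card{\setvertexi}+\card{\setvertexe}$ and $\card{\setelem}-\card{\setedge}+\card{\setvertex}=1-h$, one gets
\[
\dim\ker\matD=\card{\setedgei}-\card{\setelem}+1=\card{\setvertexi}+\card{\setvertexe}-\card{\setedgee}+h,
\]
and Remark~\ref{rem:meshborder} ($\card{\setedgee}=\card{\setvertexe}$) collapses the right-hand side to $\card{\setvertexi}+h$.

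Next I would exhibit the two families explicitly. For each internal vertex $V$, all edges incident to $V$ are internal (the triangles of the star-patch $\boundii V$ form a closed fan around $V$, so every edge at $V$ is shared by two of them); let the column $n_V\in\R^{\card{\setedgei}}$ carry $\pm1$ on exactly those edges, the signs chosen so that on every triangle $T$ of the star the two spokes at $V$ cancel — the third edge of $T$ is not incident to $V$ and contributes nothing — while $n_V$ vanishes on the three edges of every element outside the star. Hence $\matD n_V=0$; stack these into $\matN_V$ ($\card{\setvertexi}$ columns): these are the elementary circulations of $G^\star$ around each internal vertex (the relative $1$-coboundaries). For each hole $h_j$, pick a closed polygonal loop $\gamma_j\subset\domain$, transverse to the mesh and avoiding vertices, winding once around hole $j$ and zero times around every other hole; set $n_{h_j}$ equal to $\pm1$ on each internal edge crossed by $\gamma_j$ (sign from the crossing direction) and $0$ elsewhere. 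On any triangle $\gamma_j$ enters and leaves through a matched pair of edges, or misses it entirely, so $\matD n_{h_j}=0$; stack these into $\matN_h$ ($h$ columns). Thus $\matN=[\matN_V,\matN_h]$ has $\card{\setvertexi}+h$ columns in $\ker\matD$. (Constructively, one may instead complete $\operatorname{span}\{n_V\}$ to a cycle basis of $G^\star$ using the fundamental cycles of a spanning tree of $G^\star$, which also yields exactly $h$ extra vectors.)

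It remains to prove linear independence, and this is the one genuinely delicate point. For the $n_V$ I would use a peeling argument: since $\partial_u\domain\neq\emptyset$ the mesh has boundary vertices; in a vanishing combination $\sum_V c_V n_V=0$, any internal vertex joined by an edge to a boundary vertex gets $c_V=0$ (that edge carries only one $n_V$), and propagating along paths in the connected $1$-skeleton forces every $c_V=0$, so $\operatorname{rank}\matN_V=\card{\setvertexi}$. For the $n_{h_j}$ I would show they are independent modulo $\operatorname{span}\{n_V\}$: the $n_V$ span the relative $1$-coboundaries $B^1(\domain,\partial\domain)$, whereas the classes $[n_{h_j}]$ form a basis of $H^1(\domain,\partial\domain)\cong H_1(\domain)\cong\R^h$, each $n_{h_j}$ pairing to $\delta_{ij}$ with the homology class of a small loop encircling hole $i$; hence no nontrivial combination of the $n_{h_j}$ lies in $\operatorname{span}\{n_V\}$. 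Since $\dim\ker\matD=\card{\setvertexi}+h$ equals the number of columns of $\matN$, the family is a basis. The main obstacle is exactly this last step — making the ``independent of the vertex circulations'' claim airtight, either through the relative-cohomology computation $H^1(\domain,\partial\domain)\cong\R^h$ or, to stay elementary, through an explicit winding-number pairing between each $n_{h_j}$ and the loops $\gamma_i$; the incidence-matrix reading, the Euler count, and the balance checks for $n_V$ and $n_{h_j}$ are all routine.
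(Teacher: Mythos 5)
Your proof is correct and follows essentially the same route as the paper: rank--nullity with $\operatorname{rank}\matD=\card{\setelem}-1$, the Euler identity combined with $\card{\setedgee}=\card{\setvertexe}$ to reach $\card{\setvertexi}+h$, and the same two families of kernel vectors (star-patch circulations around internal vertices and closed loops around holes). The only difference is that you additionally make the linear-independence of these $\card{\setvertexi}+h$ vectors explicit (via the peeling argument from the boundary and the homological pairing for the hole loops), a step the paper's proof leaves implicit behind ``it is easy to build a basis.''
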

\begin{proof}
The rank of $\matD$ is $(\card{\setelem}-1)$ and then the dimension of its right kernel is $(\card{\setedgei}-\card{\setelem}+1)$.  Using Remark~\ref{rem:meshborder}, the dimension of the right kernel of matrix $\matD$ is then:
\begin{equation*}
\card{\setedgei}-\card{\setelem}+1 =\card{\setedge}-\card{\setedgee}-\card{\setelem}+1= \card{\setvertex}- \card{\setedgee} = \card{\setvertexi}+h
\end{equation*}

It is easy to build a basis of the right kernel by realizing it corresponds to closed paths within the mesh (see Figure~\ref{fig:delta_ker}): 
\begin{itemize}
\item Each internal node defines a star-patch with internal edges as branches. When combining the edges, all elements of the star-patch are counted twice with opposite signs.
\item The internal edges radiating from nodes on the boundary of one hole define a (closed) loop that surrounds the hole.
\end{itemize}
\end{proof}
\begin{figure}[ht]\centering
\includegraphics[width=.35\textwidth]{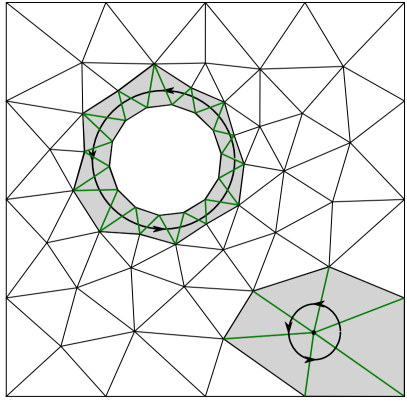}\caption{Matrix~$\matD$ kernel: internal node star patch and hole turns}\label{fig:delta_ker}
\end{figure}
Note that this basis can be efficiently obtained using tools from graph analysis or, with higher complexity using algebraic topology \citep{rapetti03}. 

\subsection{Study of $\matG$}\label{sec:kerG}
\begin{prop} Let $\matx_V$ and $\maty_V$ be the diagonal matrices of the coordinates of the internal vertexes, then the right kernel~$\kerG$ of~$\matG$ writes:
\begin{equation}
\kerG= \begin{pmatrix} \matN_V \\  \matN_V \matx_V  \\ \matN_V\maty_V  \end{pmatrix}
\end{equation}
\end{prop}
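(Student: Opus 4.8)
The claim is that the right kernel of $\matG$ is spanned by the columns of $\kerG = \begin{pmatrix} \matN_V \\ \matN_V \matx_V \\ \matN_V \maty_V \end{pmatrix}$. The plan is to verify two things: first, that each column of $\kerG$ actually lies in $\ker(\matG)$, and second, that these columns are linearly independent and exhaust the kernel by a dimension count. Throughout I use the block structure $\matG=\begin{pmatrix}\matD & & \\ & \matD & \\ & & \matD \\ \matc & \mata & \matb\end{pmatrix}$, the fact (Proposition on $\matD$) that $\matN_V$ has columns spanning the star-patch part of $\ker(\matD)$, and the explicit geometric meaning of the diagonal matrices $\matc,\mata,\matb$ as the coefficients $(c^\Gamma,a^\Gamma,b^\Gamma)$ of the line equations of the internal edges.

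\textbf{Step 1: membership.} Take one column $\kerG_k$ of $\kerG$, of the form $(\matN_k,\ \matx_V\matN_k,\ \maty_V\matN_k)^T$ where $\matN_k$ is a column of $\matN_V$, i.e. the star-patch vector of an internal vertex $V_k$ (in its $\setedgei$-indexed form, $\matx_V$ acting diagonally must be read as scaling each edge-component; more precisely the natural object is $\matN_V\matx_V$ with $\matx_V$ the coordinate matrix of internal vertexes, so the $k$-th column scales $\matN_k$ by the abscissa $x_{V_k}$). The first three block-rows of $\matG\kerG_k$ read $\matD\matN_k$, $\matD(x_{V_k}\matN_k)=x_{V_k}\matD\matN_k$, $\matD(y_{V_k}\matN_k)=y_{V_k}\matD\matN_k$, all zero since $\matN_k\in\ker\matD$. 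The last block-row is $\matc\,\matN_k + \mata\,(x_{V_k}\matN_k) + \matb\,(y_{V_k}\matN_k)$; its $\Gamma$-th entry is $(\matN_k)_\Gamma\,(c^\Gamma + a^\Gamma x_{V_k} + b^\Gamma y_{V_k})$. The key observation is that $(\matN_k)_\Gamma \neq 0$ only when $\Gamma$ is an internal edge of the star-patch of $V_k$, hence $\Gamma$ is incident to $V_k$, hence $V_k\in\Gamma$, hence $a^\Gamma x_{V_k}+b^\Gamma y_{V_k}+c^\Gamma=0$ by the very definition of the edge's line equation. So every entry vanishes and $\kerG_k\in\ker\matG$.

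\textbf{Step 2: independence and exhaustion.} The columns of $\matN_V$ are linearly independent (they are part of the basis $\matN$ of $\ker\matD$), and since the first block-row of $\kerG$ is exactly $\matN_V$, the columns of $\kerG$ are linearly independent; so $\dim\ker\matG \ge \card{\setvertexi}$. For the reverse inequality I compute $\operatorname{rank}\matG$. The matrix $\matG$ has $3\card{\setedgei}$ columns and $3\card{\setelem}+\card{\setedgei}$ rows. Pick a vector $(\mathbf{w}_1,\mathbf{w}_2,\mathbf{w}_3)^T$ in $\ker\matG$: the first three block-rows force each $\mathbf{w}_i\in\ker\matD$, so write $\mathbf{w}_i=\matN\boldsymbol{\gamma}_i=[\matN_V,\matN_h]\boldsymbol{\gamma}_i$ with $\boldsymbol{\gamma}_i\in\R^{\card{\setvertexi}+h}$. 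Substituting into the last block-row, $\matc\matN\boldsymbol{\gamma}_1+\mata\matN\boldsymbol{\gamma}_2+\matb\matN\boldsymbol{\gamma}_3=0$. I then argue this constraint pins down $\boldsymbol{\gamma}_2$ and $\boldsymbol{\gamma}_3$ in terms of $\boldsymbol{\gamma}_1$ (and kills the hole-components), recovering exactly the form in the statement — giving $\dim\ker\matG = \card{\setvertexi}$. The cleanest route, rather than analyzing that last-row constraint abstractly, is a rank count: using $\operatorname{rank}\matD = \card{\setelem}-1$ and $\card{\setedgee}=\card{\setvertexe}$ (Remark~\ref{rem:meshborder}) together with Euler's identity $\card{\setelem}-\card{\setedge}+\card{\setvertex}=1-h$, one gets $\card{\setedgei}-\card{\setelem}+1=\card{\setvertexi}+h$, and then showing the extra $\card{\setedgei}$ rows of $(\matc,\mata,\matb)$ cut the nullity down by $2\card{\setvertexi}+2h - \card{\setvertexi} = \card{\setvertexi}+2h$... this bookkeeping needs care.

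\textbf{Main obstacle.} The membership part is essentially a one-line geometric observation and is not where the difficulty lies. The real work is Step 2: establishing that nothing beyond the star-patch vectors survives, i.e. that the linear constraint imposed by the bottom block $(\matc,\mata,\matb)$ eliminates precisely the $2\card{\setvertexi}+2h$ "spurious" degrees of freedom (the extra copies of $\matN_V$ and all of $\matN_h$ in the second and third slots, plus the hole cycles). Concretely one must show that for a vector supported on $\matN$-combinations, $\matc\mathbf{w}_1+\mata\mathbf{w}_2+\matb\mathbf{w}_3=0$ forces $\mathbf{w}_2=\matx_V$-type scalings of $\mathbf{w}_1$ and $\mathbf{w}_3=\maty_V$-type scalings — which amounts to showing a certain structured matrix built from the edge coefficients $(a^\Gamma,b^\Gamma,c^\Gamma)$ restricted to cycle spaces has the expected rank. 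I expect this to follow by evaluating the constraint cycle-by-cycle: on a star-patch cycle around $V$ the edges all pass through $V$, which (as in Step 1) makes the $\matc$-part expressible via the $\mata,\matb$-parts evaluated at $V$'s coordinates; on a hole cycle one shows the only consistent solution is the trivial one. This case analysis, plus the Euler/rank bookkeeping to confirm $\dim\ker\matG=\card{\setvertexi}$, is the substantive content of the proof.
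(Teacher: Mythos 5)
Your Step~1 (membership) is exactly the paper's argument: the only nonzero entries of the star-patch vector $N_V$ sit on the internal edges radiating from $V$, each such edge passes through $V$, so $a_\Gamma x_V+b_\Gamma y_V+c_\Gamma=0$ annihilates the bottom block $(\matc,\mata,\matb)$, while the top three blocks vanish because $N_V\in\ker\matD$. Where you differ from the paper is only in how explicitly you admit that Step~2 (exhaustion) is unfinished: you correctly isolate the real issue --- showing that the single constraint $\matc\,\mathbf{w}_1+\mata\,\mathbf{w}_2+\matb\,\mathbf{w}_3=0$ on triples of $\ker\matD$ vectors leaves nothing beyond the concurrent combinations $(N_V,\,x_VN_V,\,y_VN_V)$, in particular that hole cycles are eliminated --- and you leave it as a plan (``this bookkeeping needs care''). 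But the paper's own proof does no more: its entire completeness argument is the one sentence that holes cannot contribute ``because not all rays radiating from one hole can be concurring to the same point,'' which is precisely the cycle-by-cycle concurrency observation your plan would have to execute; and neither you nor the paper addresses the point you half-raise, namely that $\mathbf{w}_1,\mathbf{w}_2,\mathbf{w}_3$ may each mix several cycles (star-patches share edges), so the edge-by-edge constraint does not decouple cycle by cycle without an argument. Also be careful with the fallback you suggest: the rank theorem only yields $\dim\ker(\matG^T)-\dim\ker(\matG)=\card{\setvertexe}$, so combining your lower bound $\dim\ker\matG\ge\card{\setvertexi}$ with the constructed left-kernel vectors of the next proposition gives two lower bounds, not the needed upper bound; closing the count still requires an independent rank estimate. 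In short: your membership proof is complete and identical to the paper's; your exhaustion step is a sketch, but it is the same sketch the paper offers.
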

\begin{proof}
The components of $\operatorname{ker}(\matG)$  need to be sought in subspaces of $\operatorname{ker}(\matD)$.
Consider one star-patch centered on vertex $V$ of coordinates $(x_V,y_V)$, let $\Gamma$ be a branch of extremity $A^\Gamma=(x_A,y_A)$. 
Of course, the center $V$ being on the edge, we have $a_\Gamma x_V + b_\Gamma y_V + c_\Gamma=0$ independently on $\Gamma\in\boundi V$. Let $N_V$ be the null mode of $\matD$ associated with~$V$. Then $\begin{pmatrix}
 N_V  ^T & 
 x_V N_V ^T & 
 y_V N_V ^T
\end{pmatrix}^T$
belongs to the kernel of $\matG$.

All internal nodes are thus associated with one kernel vector. Holes can not be associated with null modes of the matrix $\matG$ because not all rays radiating from one hole can be concurring to the same point.
\end{proof}

\begin{prop} The dimension of the left kernel of $\matG$ is equal to the number of vertexes of the mesh:
\begin{equation}
\operatorname{dim}(\ker(\matG^T))=\card{\setvertexi} +\card{\setvertexe} =\card{\setvertex}
\end{equation}
It is indeed possible to create a basis vector of $\ker(\matG^T)$ associated to each vertex of the mesh.
\end{prop}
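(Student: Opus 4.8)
The plan is to identify $\operatorname{ker}(\matG^T)$ explicitly with the classical $P_1$ Lagrange finite element space $V_1$ of continuous, piecewise-affine functions on the triangulation, whose dimension is $\card{\setvertex}$ and whose nodal basis $(\shapef^N)_{N\in\setvertex}$ furnishes exactly one basis vector per vertex. Since $\matG$ does not depend on the chosen direction it suffices to analyse it as a single linear map. A vector of $\operatorname{ker}(\matG^T)$ decomposes, according to the block structure of $\matG$, as $\mathbf{z}=(\mathbf{z}^{(1)};\mathbf{z}^{(2)};\mathbf{z}^{(3)};\mathbf{z}^{(4)})$ with $\mathbf{z}^{(1)},\mathbf{z}^{(2)},\mathbf{z}^{(3)}\in\R^{\card{\setelem}}$ (one per copy of $\matD$) and $\mathbf{z}^{(4)}\in\R^{\card{\setedgei}}$ (the geometric-consistency rows), and $\matG^T\mathbf{z}=0$ amounts to
\begin{equation}
\matD^T\mathbf{z}^{(1)}+\matc\,\mathbf{z}^{(4)}=0,\qquad \matD^T\mathbf{z}^{(2)}+\mata\,\mathbf{z}^{(4)}=0,\qquad \matD^T\mathbf{z}^{(3)}+\matb\,\mathbf{z}^{(4)}=0 .
\end{equation}

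First I would read these relations edgewise: row $\Gamma$ of $\matD^T$ applied to a vector indexed by elements returns the jump of that vector across $\Gamma$, so for every internal edge $\Gamma$ separating $T$ and $T'$ (oriented so $\delta_T^\Gamma=1$) the system reads $\mathbf{z}^{(1)}_T-\mathbf{z}^{(1)}_{T'}=-c^\Gamma\mathbf{z}^{(4)}_\Gamma$, and likewise with $a^\Gamma$ and $b^\Gamma$. Assigning to each triangle the affine polynomial $p_T(x,y)=\mathbf{z}^{(1)}_T+\mathbf{z}^{(2)}_T x+\mathbf{z}^{(3)}_T y$, these three identities say precisely that $p_T-p_{T'}=-\mathbf{z}^{(4)}_\Gamma\,(a^\Gamma x+b^\Gamma y+c^\Gamma)$, which vanishes on the line carrying $\Gamma$; hence $p_T\equiv p_{T'}$ along $\Gamma$ and the family $(p_T)$ glues to a single $p\in V_1$. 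Conversely, given $p\in V_1$ I would take for $\mathbf{z}^{(1)},\mathbf{z}^{(2)},\mathbf{z}^{(3)}$ the affine coefficients of $p$ on each triangle and for $\mathbf{z}^{(4)}_\Gamma$ the unique scalar with $p|_T-p|_{T'}=-\mathbf{z}^{(4)}_\Gamma(a^\Gamma x+b^\Gamma y+c^\Gamma)$: it exists because the jump is affine and vanishes on the line of $\Gamma$, hence is proportional to the linear form defining $\Gamma$, and it is unique because $(a^\Gamma,b^\Gamma)\neq(0,0)$ for a genuine edge. These two assignments are mutually inverse linear maps; in particular if $\mathbf{z}\mapsto p=0$ then all $p_T=0$, so $\mathbf{z}^{(1)}=\mathbf{z}^{(2)}=\mathbf{z}^{(3)}=0$, every jump vanishes, and $\mathbf{z}^{(4)}=0$. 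Thus $\operatorname{ker}(\matG^T)\cong V_1$, whence $\dim\operatorname{ker}(\matG^T)=\card{\setvertex}$, and the image of $\shapef^N$ under this isomorphism is the basis vector attached to vertex $N$ — supported on the elements of the star of $N$ and on the internal edges emanating from $N$, since $p_T=\shapef^N|_T$ vanishes off that star and on the edge of each star triangle opposite to $N$.

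As an independent consistency check I would recompute the dimension algebraically: by rank--nullity $\dim\operatorname{ker}(\matG^T)=(3\card{\setelem}+\card{\setedgei})-\operatorname{rank}(\matG)$ and $\operatorname{rank}(\matG)=3\card{\setedgei}-\dim\operatorname{ker}(\matG)=3\card{\setedgei}-\card{\setvertexi}$ by the preceding proposition, so $\dim\operatorname{ker}(\matG^T)=3\card{\setelem}-2\card{\setedgei}+\card{\setvertexi}$; then $3\card{\setelem}=2\card{\setedgei}+\card{\setedgee}$ (each triangle carries three edges, internal ones shared by two triangles) and $\card{\setedgee}=\card{\setvertexe}$ (Remark~\ref{rem:meshborder}) give $\card{\setedgee}+\card{\setvertexi}=\card{\setvertexe}+\card{\setvertexi}=\card{\setvertex}$, independently of the number of holes.

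The step I expect to be the crux is recognising that the geometric rows $(\matc,\mata,\matb)$ encode nothing but interelement continuity of affine pieces: once one notes that $a^\Gamma x+b^\Gamma y+c^\Gamma$ is precisely the linear form cutting out the edge $\Gamma$, the correspondence with $V_1$ is forced and the remainder is bookkeeping. The only external ingredient is the textbook fact that the continuous $P_1$ space on a conforming triangulation has dimension equal to the number of vertices, with the hat functions as a basis.
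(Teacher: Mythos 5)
Your proof is correct and reaches the statement by a genuinely different route than the paper. The paper obtains the dimension purely by rank--nullity combined with the counting identity $3\card{\setelem}-(2\card{\setedgei}+\card{\setedgee})=0$, and then separately exhibits, star-patch by star-patch, an explicit left-kernel vector with closed-form components $(\alpha,\beta,\gamma,\mu)$ built from the line coefficients of the border edges. You instead read the transposed constraints $\matD^T\mathbf{z}^{(k)}+(\matc,\mata,\matb)\,\mathbf{z}^{(4)}=0$ edgewise as the statement that the per-element affine polynomials $p_T$ jump across each internal edge by a multiple of the linear form cutting out that edge, hence glue into a continuous piecewise-affine function; this identifies $\ker(\matG^T)$ with the $P_1$ Lagrange space in one stroke, giving the dimension, the one-basis-vector-per-vertex claim, and (as a bonus) exactly the interpretation the paper needs in the \emph{next} proposition, where orthogonality of the right-hand side to these kernel vectors is recognized as Galerkin orthogonality against hat functions. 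Your rank--nullity cross-check reproduces the paper's own dimension count, so nothing is lost. The conceptual approach is arguably cleaner; the paper's explicit formulas are what one would code.

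One detail in your closing remark is wrong, though it does not affect the proposition. You claim the basis vector attached to $N$ has its edge component supported only on the rays emanating from $N$, "since $p_T=\shapef^N|_T$ vanishes on the edge of each star triangle opposite to $N$." Vanishing of the jump \emph{restricted to} an edge is automatic for every internal edge (that is continuity); the coefficient $\mathbf{z}^{(4)}_\Gamma$ vanishes only when the jump is zero \emph{as an affine function of the plane}. Across a border edge $\Gamma_b$ of the star that is internal to the mesh, the jump is $\shapef^N|_T-0=\shapef^N|_T$, a nonzero affine function proportional to $a^{\Gamma_b}x+b^{\Gamma_b}y+c^{\Gamma_b}$, so $\mathbf{z}^{(4)}_{\Gamma_b}\neq 0$. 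This is consistent with the paper's explicit vector, whose $\mu$ is nonzero on the border edges of the star-patch as well as on the rays.
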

\begin{proof}
The result is a combination of the rank theorem applied to matrix $\matG^T$, and of the relation $3\card{\setelem}-(2\card{\setedgei}+\card{\setedgee})=0$ which corresponds to the fact that each element is made out of 3 edges whereas each internal edge belongs to 2 elements and each external edge belongs to one element.

For the construction of a basis vector, we can restrict our analysis to elements and edges involved in a star-patch. 
Let $O$ be a vertex (internal or external), the elements in the star-patch are the $\setelem_{SP}=\boundii O$. {The edges involved in the star-patch are the rays $\setedge_{SP,r} = \boundi O$ and the borders $\setedge_{SP,b} = \bound (\boundii O )$. The set of internal edges involved in the star-patch is then  $\setedgei_{SP}=\left(\setedge_{SP,r}\bigcup\setedge_{SP,b}\right) \setminus \setedgee$.}

We can isolate the part $\tilde{\matG}$ of $\matG^T$ concerned by these components when searching its kernel:
\begin{equation}
\begin{pmatrix}
\tilde{\matD}^T& & & \tilde{\matc} \\
&\tilde{\matD}^T & & \tilde{\mata} \\
&&\tilde{\matD}^T &  \tilde{\matb}
\end{pmatrix}
\begin{pmatrix}
\gamma \\
\alpha \\
\beta \\
\mu
\end{pmatrix} = 0
\end{equation}
where $\tilde{\matD}^T$ (respectively $\tilde{\matc},\ \tilde{\mata},\ \tilde{\matb}$) is the submatrix of $\matD^T$ (respectively $\matc,\ \mata,\ \matb$) related to the star-patch of size $\card{\setedgei_{SP}}\times\card{\setelem_{SP}}$ (respectively diagonal matrices of size $\card{\setedgei_{SP}}$).
$(\alpha,\beta,\gamma)$ are vectors associated with elements, $\mu$ is associated with edges. We note $\tilde{\matl}$ the diagonal matrix of the length of edges in  $\setedgei_{SP}$, so that $0<\tilde{\matl}^2=\tilde{\mata}^2+\tilde{\matb}^2$.

{The border edges $\setedge_{SP,b}$ (including the ones on the border of $\domain$) are always associated to one (and only one) element of the star-patch}, so that we number border edges and elements accordingly, see Figure~\ref{fig:patchesEET2}.

The reader can verify that the following vector belongs to the kernel of~$\tilde{\matG}^T$ (next proposition gives an important interpretation):
\begin{equation}
\left\{\begin{aligned}
\alpha&=\mate_b^{-1}\mata_b\\
\beta&=\mate_b^{-1}\matb_b\\
\gamma&=\mate_b^{-1}\matc_b\\
\mu&=-\tilde{\matl}^{-2} \left(\tilde{\mata}\tilde{\matD}^T\alpha +\tilde{\matb}\tilde{\matD}^T\beta\right)
\end{aligned}\right. \quad \text{ where } \mate_b=x_O\mata_b+y_O\matb_b+\matc_b
\end{equation}
$\mate_b$ is a diagonal matrix of non-null coefficients since the center of the star-patch can not be aligned with one of the border edge.
\end{proof}

\begin{figure}
\centering
\includegraphics[width=.6\textwidth]{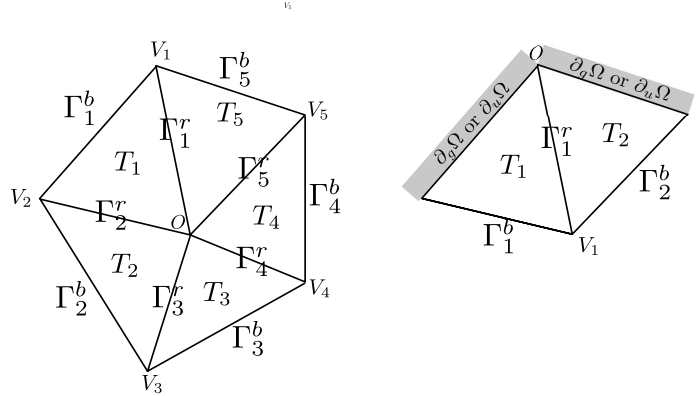}\caption{Star-patches, numbering of elements, border edges (b) and radial edges (r)}\label{fig:patchesEET2}
\end{figure}

\begin{prop}
The strong prolongation equation~\eqref{eq:todo1} has solutions.
\end{prop}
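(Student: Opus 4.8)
The plan is to show that System~\eqref{eq:todo1}, written compactly as $\matG\,\vecW(\basis)=\tvecR(\basis)$, is solvable by verifying that the right-hand side is orthogonal to the left kernel of $\matG$. Since the previous proposition exhibits an explicit basis of $\ker(\matG^T)$ indexed by the vertexes of the mesh, it suffices to check that each such basis vector annihilates the right-hand side multivector. The key observation is that the kernel vector associated with a vertex $O$ is essentially a ``weighted star-patch'' combination: the element components $(\gamma,\alpha,\beta)$ encode the element indicator of the star-patch $\boundii O$ weighted so that, when paired against $\tvecR(\basis)$, they reproduce exactly the residual equation~\eqref{eq:prolo_resid} tested against the hat function $\shapef_H^O$ (in both directions $\un{e}_x$, $\un{e}_y$). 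In other words, orthogonality to this kernel vector is nothing but the classical EET compatibility condition on the star-patch~\eqref{eq:sysEET_SP}, which holds because it is a linear combination of the elementary finite element equilibria~\eqref{eq:feeq}.

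First I would recall the structure of the left-kernel basis vector from the preceding proof: for a vertex $O$ it has element-parts $\alpha=\mate_b^{-1}\mata_b$, $\beta=\mate_b^{-1}\matb_b$, $\gamma=\mate_b^{-1}\matc_b$ and an edge-part $\mu$ determined from them. Pairing $(\gamma,\alpha,\beta)$ with the element residual blocks $(\vecR(\un{e}),\vecR(x\un{e}),\vecR(y\un{e}))$ of $\tvecR$ produces, element by element, a quantity of the form $R_T\big((\gamma_T + \alpha_T x + \beta_T y)\un{e}\big)$. The weights are built precisely so that the affine function $\gamma_T+\alpha_T x+\beta_T y$, restricted to the relevant border edge of $T$ in the star-patch, takes the value $1$ at $O$ and interpolates the hat function $\shapef_H^O$; summing over the star-patch then yields $R_{\boundii O}(\shapef_H^O\un{e})$, which vanishes by the global finite element equilibrium~\eqref{eq:leftker} localized to the star-patch (the internal-edge works cancel in pairs, and the boundary data were already incorporated into the complete residual $R_T$ in Section~\ref{ssec:elimextern}). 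The last row block of $\tvecR$ is zero, so the $\mu$-part of the kernel vector contributes nothing there; one only has to confirm that the contribution of $\mu$ against the three $\vecR$-blocks is consistent with the geometric constraint rows, which is automatic since $\mu$ was defined from $(\alpha,\beta,\gamma)$ through the same edge-geometry relations.

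Concretely, the steps are: (i) take an arbitrary vertex $O$ and its left-kernel vector $z_O=(\gamma,\alpha,\beta,\mu)^T$; (ii) compute $z_O^T\,\tvecR(\basis)$ and regroup the element terms into $\sum_{T\in\boundii O} R_T\big((\gamma_T+\alpha_T x+\beta_T y)\un{e}\big)$ for $\un{e}\in\{\un{e}_x,\un{e}_y\}$; (iii) identify the affine weight on each element of the star-patch with the barycentric/hat shape function $\shapef_H^O$, using that $\mate_b=x_O\mata_b+y_O\matb_b+\matc_b\neq 0$ guarantees the normalization; (iv) conclude that the sum equals $R_{\boundii O}(\shapef_H^O\un{e})=0$ by~\eqref{eq:leftker} (internal-edge works telescope, external contributions are already balanced); (v) since $\{z_O\}_{O\in\setvertex}$ spans $\ker(\matG^T)$ and $\tvecR(\basis)\perp z_O$ for every $O$, the Fredholm alternative gives existence of $\vecW(\basis)$ with $\matG\,\vecW(\basis)=\tvecR(\basis)$.

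The main obstacle I anticipate is step~(iii): making rigorous the claim that the affine weights $(\gamma_T,\alpha_T,\beta_T)$ coming from $\mate_b^{-1}(\matc_b,\mata_b,\matb_b)$ really do glue together, across the triangles of the star-patch, into the single continuous piecewise-affine hat function $\shapef_H^O$. This requires tracking the orientation signs $\delta_T^\Gamma$ and the edge-to-element numbering convention of Figure~\ref{fig:patchesEET2} carefully, and checking that the normalization on each border edge is the value-$1$-at-$O$ condition. The boundary-vertex case (where $O\in\setvertexe$) needs a small separate check: the star-patch is then an open fan, one of its border edges lies on $\bound\domain$, and the already-incorporated Neumann/Dirichlet data in $R_T$ make the localized equilibrium hold exactly as for interior vertexes. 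Everything else — the cancellation of internal-edge works and the vanishing of the zero row block — is routine.
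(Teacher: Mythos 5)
Your proposal is correct and follows essentially the same route as the paper: both verify orthogonality of the right-hand side to the vertex-indexed left-kernel basis of $\matG$, identify the affine weights $\mate_b^{-1}(\matc_b,\mata_b,\matb_b)$ on each element of the star-patch with the hat function $\shapef_H^O$ (normalized to $1$ at $O$ via $\mate_b$), and conclude by Galerkin orthogonality of the finite element residual. The gluing subtlety you flag in step~(iii) is exactly the point the paper's proof addresses with its remark that the characteristic equation of a border edge, restricted to the adjacent element, is the slope of the hat function.
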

\begin{proof}
We just need to verify that the right-hand-side is orthogonal to the kernel of $\tilde{\matG}^T$. If we use one vector of the basis built in previous expression and apply it to the right hand side, we in fact compute the work of the residual in one FE shape (hat) function, which is zero by the definition of the finite element approximation (Galerkin orthogonality). Indeed the characteristic equation of a border edge, restricted to the adjacent element, is exactly the slope of the hat function; the normalizing term $\mate_b^{-1}$ ensures that all the slopes are worth 1 at the vertex $O$.
\end{proof}


\subsection{Efficient solving}\label{sec:solG}
Previous analysis proves that the rectangular system \eqref{eq:todo1}, though it is rank deficient in rows and columns, remains well posed. Because solutions are defined up to a member of $\operatorname{ker}(\matG)$ of which we know one basis ($\kerG$), the resolution is separated in two steps: this subsection addresses the question of finding one solution, the next subsection addresses the question of choosing an optimal contribution in the kernel of $\matG$ (for a chosen criterion).

Matrix $\matD$ is the algebraic description of the topological properties of circuits of edges seen as borders of elements: summation corresponds to union, multiplication by $(-1)$ corresponds to the reversal of the orientation. This structure implies the existence of the following Smith normal form for matrix $\matD$:
\begin{equation}
\begin{aligned}
\exists (\matU,\matV) & \text{ invertible signed boolean matrices with signed boolean inverses}\\
\text{such that}
\\
\matU\matD\matV&= \begin{pmatrix} 
1 & 	  & 	& 0&\hdots&0\\
  &\ddots & 	& \vdots&0&\vdots\\
  &       & 1 	& 0&&0\\
0 & \hdots & 0 	& 0 & \hdots & 0  
\end{pmatrix} \qquad \text{ with } \begin{aligned}\matU&=[\tilde{\matU}^T, \veci]^T \\\matV&=[\tilde{\matV},\matN]\end{aligned}
\end{aligned}
\end{equation}
From a practical point of view, this factorization can be easily computed by a full-pivoting Gauss procedure. Note that because $\matD$ is very sparse, the complexity of the procedure is linear (the position of pivots is always known a priori). 
 Note also that the left $(\veci)$ and right $(\matN)$ kernels of $\matD$ appear in the factorization.

In order to simplify notations, the columns of each multivector are gathered with the notation $\un{e}$, for instance $\vecR(1\un{e}):{=} \begin{pmatrix}\vecR(1\un{e}_x) &\vecR(1\un{e}_y) \end{pmatrix}$. We apply the Smith transform to system \eqref{eq:todo1}, and we split the works in two parts: $\vecW(f\un{e})=\begin{pmatrix}\tilde{\matV}&\matN\end{pmatrix} \begin{pmatrix}\alpha_{\un{e}}^f&\beta_{\un{e}}^f\end{pmatrix}^T$ (for $f=1,x,y$).
\begin{equation}
\begin{pmatrix}
\matI & \matO & & & & & \\
0& 0& & & & &  \\
& & \matI & \matO& &  \\
& & 0 & 0 & &  \\
& & & & \matI & \matO \\
& & & & 0 & 0\\
\matc\tilde{\matV} & \matc\matN &\mata\tilde{\matV}& \mata\matN  &\matb \tilde{\matV}& \matb \matN
\end{pmatrix}
\begin{pmatrix}
\alpha^1_{\un{e}} 
\\\beta^1_{\un{e}}
\\\alpha^x_{\un{e}}
\\\beta^x_{\un{e}} 
\\\alpha^y_{\un{e}}
\\\beta^y_{\un{e}}
\end{pmatrix}=
\begin{pmatrix}
\tilde{\matU}\vecR(1\un{e}) \\ 0 \\
\tilde{\matU}\vecR(x\un{e}) \\ 0 \\
\tilde{\matU}\vecR(y\un{e}) \\ 0 \\ \matO
\end{pmatrix}
\end{equation}
Previous equation determines the $(\alpha^f_{\un{e}})$ and all that remain to solve is: 
\begin{equation}
\begin{pmatrix}
 \matc\matN & \mata\matN  & \matb \matN
\end{pmatrix} \begin{pmatrix}
\beta^1_{\un{e}}\\\beta^x_{\un{e}} \\\beta^y_{\un{e}}
\end{pmatrix}=-\begin{pmatrix} \matc \tilde{\matV}\tilde{\matU}\vecR(1\un{e})+
\mata\tilde{\matV}\tilde{\matU}\vecR(x\un{e})+\matb \tilde{\matV}\tilde{\matU}\vecR(y\un{e})
\end{pmatrix}
\end{equation}
The matrix on the left hand side is $\card{\setedgei}\times3(\card{\setvertexi}+h)$. From previous analysis, we know the right kernel is $\card{\setvertexi}$ large. If we use the adapted internal vertex/hole basis for the left kernel, $\matN=[\matN_V,\matN_h]$, we even can tell that $\begin{pmatrix}\matc\matN_h&
  \mata\matN  & \matb \matN  \end{pmatrix}$ is full column-ranked. Another interest of the adapted basis for the kernel is that it maximizes the sparsity of the matrices (two star-patches have at most one edge in common, and one star-patch is in general made out of no more than 6 edges).

We then have to seek to the solution of the following full-column-ranked rectangular ($\card{\setedgei}\times (2\card{\setvertexi}+3h)$) though well-posed problem:
\begin{equation}
\begin{pmatrix}
\matc \matN_h& \mata\matN  & \matb \matN 
\end{pmatrix} \begin{pmatrix}
 \beta^h_{\un{e}} \\\beta^x_{\un{e}} \\\beta^y_{\un{e}}
\end{pmatrix}=-\begin{pmatrix}  \matc \tilde{\matV}\tilde{\matU}\vecR(1{\un{e}})+\mata\tilde{\matV}
\tilde{\matU}\vecR(x{\un{e}})+\matb \tilde{\matV}\tilde{\matU}\vecR(y{\un{e}})
\end{pmatrix}
\end{equation}
One possibility to solve that system is to use a LU factorization with pivoting in order to preserve sparsity: not only very few coefficients are non-zero but also the columns of $\mata\matN_V$ and $\matb \matN_V$ have the same fill-in (same remark holds for  $\mata\matN_h$, $\matb \matN_h$ and $\matc \matN_h$). 

As a result, we have determined at a low cost one solution matrix ($3\card{\setedgei}$ rows and 2 columns) to system \eqref{eq:todo1}:
\begin{equation}\label{eq:w0} \vecW_0(\mathcal{B})=\begin{pmatrix}
\tilde{\matV}\tilde{\matU}& & \\
& \tilde{\matV}\tilde{\matU}& \\
& & \tilde{\matV}\tilde{\matU}
\end{pmatrix}
\begin{pmatrix}\vecR(\un{e})\\ \vecR(x\un{e})\\ \vecR(y\un{e}) \end{pmatrix}
+
\begin{pmatrix}
\matN_h&&\\
&\matN&\\&&\matN\end{pmatrix}
\begin{pmatrix}
\beta^h_{\un{e}} \\
\beta^x_{\un{e}} \\
\beta^y_{\un{e}} \end{pmatrix}
\end{equation}

\subsection{Criteria to choose the element of the kernel}\label{sec:optimW}
Since we know the basis $\kerG$ of the right kernel of $\matG$, and one solution $\vecW_0(\mathcal{B})$ to  equation~\eqref{eq:todo1}, all works associated with stress fields satisfying the strong prolongation equation write  $\vecW(\mathcal{B})=\vecW_0(\mathcal{B})+\kerG\begin{pmatrix}\gamma_{\un{e}_x} & \gamma_{\un{e}_y}\end{pmatrix}$ where $(\gamma_{\un{e}_x} , \gamma_{\un{e}_y})$ are  vectors of size $\card{\setvertexi}$ which can be used in order to optimize the works with respect to a well chosen criterion.


Note that optimizing $(\gamma_{\un{e}_x} , \gamma_{\un{e}_y})$ is a mandatory step because the particular solution  $\vecW_0(\mathcal{B})$ was obtained with few references to the original mechanical problem, so that choosing $\gamma_x=\gamma_y=0$ leads in general to very large error estimation. 

In order to reconnect to the mechanical problem, the best option is to try to minimize the error estimate $\ecr{\depH,\ts}{\domainH}$, which in our case corresponds to minimizing the strain energy of the reconstructed SA-field. A less meaningful but much cheaper reference is the original finite element stress field $\sigH$.

In simplest cases, the criterion preserves the separation between the components, so that block resolution can be carried on. This is typically the case in the classical EET technique. In more general cases (like when we minimize the estimator), the components are coupled and must be sought together in a unique vector $\begin{pmatrix}\gamma_{\un{e}_x}^T & \gamma_{\un{e}_y}^T\end{pmatrix}^T$.

\subsubsection{Optimization with respect to the FE solution}

From the FE stress field $\sigH$, we can deduce an average traction $\FH$ and its work:
\begin{equation}
\begin{aligned}
\FH& = \sum_{T\in\boundi \Gamma} {\theta_T^\Gamma}  ({\sigH}_T\cdot\un{n}_T^\Gamma) \\
W_H^\Gamma(\un{\phi}_H) &= \sum_{T\in\boundi \Gamma} \int_\Gamma {\theta_T^\Gamma} ({\sigH}_T\cdot\un{n}_T^\Gamma)\cdot\un{\phi}_H \,dS
\end{aligned}
\end{equation}
where $ \theta_T^\Gamma$ is the ratio of the triangle's area over the area of the 2 triangles so that $\sum_{T\in\boundi \Gamma}  \theta_T^\Gamma=1$ (in classical EET $ \theta_T^\Gamma=\frac{1}{2}$).

Because stresses are constant inside elements, this average traction field is constant along the edge.  We note $\vecWnh_H(\mathcal{B})$ the vector of edge works associated with our test basis.

It is then possible to seek the work satisfying the strong prolongation which is the nearest (in the sense of a chosen norm) to the FE edge work:
\begin{equation}
\begin{aligned}
\begin{pmatrix}\gamma_{\un{e}_x} & \gamma_{\un{e}_y}\end{pmatrix} &= \arg\min_\mu \|\vecW_0(\mathcal{B})+\kerG\begin{pmatrix}\mu_{\un{e}_x} & \mu_{\un{e}_y}\end{pmatrix} - \vecWnh_H(\mathcal{B})\|_\mathcal{M}
\end{aligned}
\end{equation}
If we assume the matrix norm $\mathcal{M}$ treats the columns independently (like the Frobenius norm), and is represented by symmetric positive definite Matrix $\matM$, we have:
\begin{equation}\label{eq:optimnorm}
\begin{pmatrix}\gamma_{\un{e}_x} & \gamma_{\un{e}_y}\end{pmatrix}
\begin{aligned}
&= - \left(\kerG^T \matM \kerG \right)^{-1} \kerG^T \matM\left(\vecW_0(\mathcal{B})- \vecWnh_H(\mathcal{B})\right) 
\end{aligned}
\end{equation}
$\matM$ shall be chosen so that sparsity is preserved in order to make Cholesky factorization of the sparse $\card{\setvertexi}\times\card{\setvertexi}$ matrix  $\left(\kerG^T \matM \kerG \right)$  very fast. 

In the next section, assessments are given for the euclidean norm $\matM=\matI$ and the estimator based on this choice of minimization is referred to as \textsc{ starfleet}$_{\lVert\cdot\rVert_2}$. This estimator is given only because it is the fastest to apply, its performance are not expected to be good.

\subsubsection{Connection to classical EET}\label{sec:StarfleetEET}

In this subsection, we show that a well chosen norm corresponds exactly to the classical minimization within star-patches of the classical EET.

The EET criterion decouples the components $(\un{e}_x,\un{e}_y)$, so that the equation~\eqref{eq:optimnorm} applies and only one simple matrix $\matM$ needs to be defined. In order to simplify the writing of equations, we only work on one component which is not mentioned.

The EET seeks linear distributions of traction forces per edge which satisfy the strong prolongation condition. On one star-patch of center the internal node $N$, this problem is under-constrained, it is closed by the minimization of the distance $D_N$ between the work of the FE solution and of the SA-field in the shape function $\phi_N$:
\begin{equation}
D_N = \sum_{\Gamma \in\boundi N} \left(\int_\Gamma (\stF- \sFH)\phi_N  dS\right)^2
\end{equation}
When summing over all internal nodes, we obtain:
\begin{equation}
\sum_{N\in\setvertexi} D_N = \sum_{N\in\setvertexi}\sum_{\Gamma \in\boundi N} \left(\int_\Gamma  (\stF- \sFH)\phi_N  dS\right)^2 
\end{equation}
$\stF$ being searched for as a linear distribution, we can write
\begin{equation}
\stF = \stF_0 + \epsilon^\Gamma_N \stF_1(\phi_N - \frac{1}{2})
\end{equation}
where $\stF_0$ is the average value of $\stF$, and $\stF_1$ its variation along the edge (for a given orientation); $\epsilon^\Gamma_N$ is worth $+1$ on one vertex of $\Gamma$ (necessarily internal) and $-1$ on the other one (potentially on the boundary). Let $(x^\Gamma_0,y^\Gamma_0)$ be the middle of Edge~$\Gamma$, after integration on the edge, we get (the test field of the work is written as an index to shorten the expression, and $W^{\Gamma}_{H}$ stands for $W^{\Gamma}_{H}(1)$):
\begin{equation}
\begin{aligned}
&\sum_{N\in\setvertexi} D_N = \sum_{N\in\setvertexi}\sum_{\Gamma \in\boundi N} \left( \frac{1}{2}(\stF_0- \sFH)|\Gamma|+\frac{\epsilon^\Gamma_N}{12} \stF_1|\Gamma|\right)^2 \\ 
&= \sum_{N\in\setvertexi}\sum_{\Gamma \in\boundi N} \left(
\frac{\hat{W}^{\Gamma}_1 - W^{\Gamma}_{H}}{2} 
+ \frac{\epsilon^\Gamma_N}{2}\left( a_\Gamma (\hat{W}^\Gamma_y - y_0^\Gamma \hat{W}^\Gamma_1) - b_\Gamma (\hat{W}^\Gamma_x -  x_0^\Gamma \hat{W}^\Gamma_1) \right)\right)^2
\end{aligned}
\end{equation}
We can distinguish between edges whose vertexes are both internal ($\setedgeii$) and edges with one vertex on the boundary.
\begin{equation}
\begin{aligned}
&\sum_{N\in\setvertexi} D_N = \sum_{\Gamma \in \setedgeii} \frac{1}{2}\left(
\hat{W}^{\Gamma}_1 - W^{\Gamma}_H) \right)^2\!\! + \frac{1}{2}\left( a_\Gamma (\hat{W}^\Gamma_y - y_0^\Gamma \hat{W}^\Gamma_1) -b_\Gamma(\hat{W}^\Gamma_x -  x_0^\Gamma \hat{W}^\Gamma_1 )\right)^2 \\
 &+ \sum_{\Gamma \in \setedgei\setminus \setedgeii} \left(\frac{\hat{W}^{\Gamma}_1 - W^{\Gamma}_H}{2} 
+\frac{1}{2}\left( a_\Gamma (\hat{W}^\Gamma_y - y_0^\Gamma \hat{W}^\Gamma_1) - b_\Gamma (\hat{W}^\Gamma_x - x_0^\Gamma \hat{W}^\Gamma_1)\right)
\right)^2
\end{aligned}
\end{equation}
{
This criterion couples the works $(\hat{W}^{\Gamma}_1,\hat{W}^{\Gamma}_x,\hat{W}^{\Gamma}_y)$ on the same edge, this corresponds to matrix $\matM$ with only 3 non zero value per row.}

{
In the assessment, we propose a comparison, in term of CPU time, of the implementations of that criterion, using classical EET and using \textsc{Starfleet}.}

\subsubsection{Error estimator minimization}
This choice is of course the best possible but it is associated with much more computations and its interest is mostly theoretical. 

Because the two components $(\un{e}_x,\un{e}_y)$ are coupled by this criterion, we need to adopt a new notation, for a one-column vector of works:
\begin{equation*}
\begin{aligned}
\vvecW = \begin{pmatrix}
\vecW(\un{e}_x)^T & \vecW(x\un{e}_x)^T & \vecW(y\un{e}_x)^T &
\vecW(\un{e}_y)^T & \vecW(x\un{e}_y)^T & \vecW(y\un{e}_y)^T 
\end{pmatrix}^T \\ 
\un{\gamma} = \begin{pmatrix}\gamma_{\un{e}_x}\\\gamma_{\un{e}_x}\end{pmatrix} \qquad \un{\kerG} = \begin{pmatrix}\kerG&0\\0&\kerG\end{pmatrix}
\end{aligned}
\end{equation*}

As explained in Section~\ref{sec:highprec}, the works enable us to define high degree problems on elements, let $\stiff_{T}^p$ be the stiffness matrix associated with the precise approximation and $\matB_T$ be the matrix which defines the higher order boundary conditions on element $T$ from the edge work $\vvecW$. We need to minimize the strain energy:
\begin{equation}
\int_{\domainH} \ts:\hooke^{-1}:\ts d\Omega  = \frac{1}{2}\vvecW^T \left(\sum_{T\in\setelem} \matB_T^T {{\stiff^p}^{-1}_{T}} \matB_T\right)\vvecW
\end{equation}
which leads to the following results:
\begin{equation}
\un{\gamma} =  \left(\un{\kerG}^T\sum_{T\in\setelem} \matB_T^T {\stiff^p_{T}}^{-1} \matB_T\un{\kerG}\right)^{-1}\un{\kerG}^T\left(\sum_{T\in\setelem} \matB_T^T {\stiff^p_{T}}^{-1} \matB_T\right) \vvecW_0
\end{equation}
The computation of the element precise boundary conditions and stiffnesses being a necessity in any case, all costs are concentrated in the forming and factorization of the $2\setvertexi \times 2\setvertexi$ sparse symmetric matrix:
\begin{equation*} 
\left(\un{\kerG}^T\sum_{T\in\setelem} \matB_T^T {\stiff^p_{T}}^{-1} \matB_T\un{\kerG}\right)
\end{equation*}
 Despite the sparsity of this matrix (for well chosen kernel basis), the computational cost of this optimization is significant. It somehow corresponds to the solving of a dual problem of reduced dimension by primal ways. The estimator based on this choice of minimization is referred as \textsc{starfleet}$_{\lVert\cdot\rVert_{erdc}}$.



\bigskip
Algorithm~\ref{alg:EET/Starfleet}, summarizes the main steps of both \textsc{Starfleet} and  classical EET.

\begin{landscape}
\begin{algorithm2e}[ht]
\caption{EET (left) versus STARFLEET (right)}\label{alg:EET/Starfleet}
\begin{minipage}{0.46\linewidth}
$\bullet$ Construction of edge tractions $\tF$\;
\For{$V\in\setvertex$}{
Construction of the star-patch around $V$, detection of its type\;
\For{$T\in\boundii V$}{
Computation of the rhs \eqref{eq:prolo_resid}\;
}
Formation of the system corresponding to the strong prolongation equation \eqref{eq:sysEET_SP} if needed addition of extra constraint for its resolution (see Section~\ref{sec:StarfleetEET})\;
}
$\bullet$ Computation of $\ecr{\depH,\ts}{T}$ \;
\For{$T\in\setelem$}{
Construction and resolution of System \eqref{eq:Eeq}\;Computation of the element contribution to the error\;
}
\end{minipage}\hfill\vline\hfill
\begin{minipage}{0.46\linewidth }
$\bullet$ Construction of edge works $\hat{\mathbf{W}}$\;
Construction of matrices $\matD$, $\mata$, $\matb$, $\matc$\;
\For{$T\in\setelem$}{
Computation of the right-hand side $R_T(\basis)$ (Section~\ref{ssec:elimextern})\;
}
Computation of one solution $\hat{\mathbf{W}}_0(\basis)$,  Eq.~\eqref{eq:w0}\;
Optimization within $\operatorname{ker}(\matG)$ (Section~\ref{sec:optimW})\;
$\bullet$ Computation of $\ecr{\depH,\ts}{T}$\;
Definition of higher-degree works, Eq.~\ref{eq:hdW}\;
\For{$T\in\setelem$}{
Construction and resolution of System \eqref{eq:hdelem} in the canonical basis $\basis^2$\;
Computation of the element contribution to the error\;
}
\end{minipage}
\end{algorithm2e}
\end{landscape}

\section{Assessment}\label{sec:assessment}
\newcommand{\sfs}{SF}

In this section, the behavior of the estimate resulting from the construction of a SA-field presented above is analyzed for several mechanical problems. The FE computation and the construction of statically admissible stress fields are performed using an Octave code \citep{octave:2012}. For all the considered structures, the material is chosen to be isotropic, homogeneous, linear and elastic with Young's modulus $E=1$ and Poisson's ratio $\nu=0.3$.  We compare the new SA-field reconstruction (\textsc{starfleet} with $\|\|_2$ and $\|\|_{erdc}$ optimization) to the classical Element Equilibration Technique (which corresponds to a specific version of the \textsc{starfleet}) and Flux-free technique in term of quality of the estimators. In all cases, p-refinement is used for the local computations.
\textsc{starfleet} is shorten to \sfs\ in  figures and tables.

\subsection{Plane stress problem with analytical solution}
First, let us consider the rectangular domain $\Omega=]0, 8l[\times ]0,l[$, with Dirichlet homogeneous boundary conditions on $\partial \Omega$. The loading is a source term chosen such that the exact solution has the following analytical expression:
\begin{equation}
  \left\{
    \begin{aligned} 
\underline{u}\cdot \underline{e}_x&=x(x-8l)y(y-l)^3 \\
\underline{u}\cdot \underline{e}_y&=x(x-8l)y^2(y-l) 
\end{aligned}
  \right.
\end{equation}




Therefore, we can calculate the true error $e_{ex}$ through the exact energy norm:
\begin{equation}
e_{ex}=\sqrt{\lVert  \strain{\underline{u}_{ex}} \rVert_{ {\hooke}, \Omega}^2 -\lVert \strain{\depH} \rVert_{ {\hooke}, \Omega}^2}
\end{equation}
Figure~\ref{fig:poutre_p} and Table~\ref{tab:poutre_p} correspond to p+2 refinement applied to the EET, the flux-free, and to the \textsc{starfleet} (with various optimization). 
On that very regular problem, we observe that all estimators behave quite similarly.

\begin{figure}[ht]\centering
\begin{minipage}{.44\textwidth}
\begin{flushleft}
\includegraphics[width=.99\textwidth]{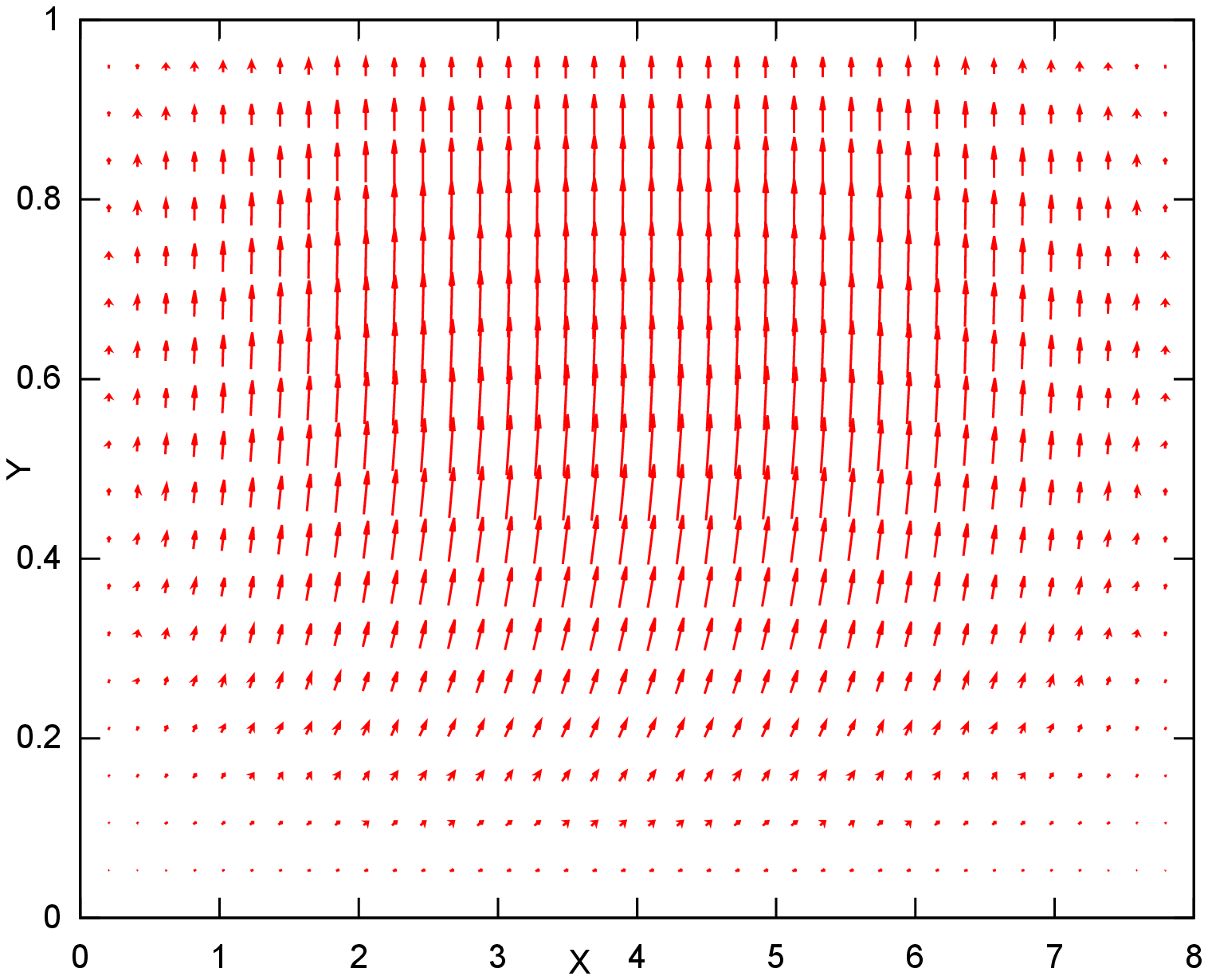}\caption{Displacement field of the problem with analytic solution}\label{fig:carrexact}
\end{flushleft}
\end{minipage}\hfill
\begin{minipage}{.55\textwidth}
\begin{flushright}
\begin{tikzpicture}
\begin{loglogaxis}[
scale=.94,
xlabel=$h$,
legend style={at={(.6,0.05)}, anchor=south west}] 

\addplot[color=blue,mark=diamond*] table[x=h,y=EET] {poutre_p.txt};
\addlegendentry{\begin{footnotesize}EET\end{footnotesize}}
\addplot table[x=h,y=STARFLEET] {poutre_p.txt};
\addlegendentry{\begin{footnotesize}{\sfs $_{\lVert\cdot\rVert_2}$}\end{footnotesize}}
\addplot table[x=h,y=STARFLEET2] {poutre_p.txt};
\addlegendentry{\begin{footnotesize}{ \sfs $_{\lVert\cdot\rVert_{erdc}}$}\end{footnotesize}}
\addplot table[x=h,y=SPET] {poutre_p.txt};
\addlegendentry{\begin{footnotesize}{ Flux-free}\end{footnotesize}}
\addplot[draw=violet, mark=triangle*] table[x=h,y=evraie] {poutre_p.txt};
\addlegendentry{\begin{footnotesize}$e_{ex}$\end{footnotesize}}
\addplot[color=black] coordinates {
(0.075, 0.5)
(0.075, 0.9375)
(0.04 , 0.5)
(0.075, 0.5)
};
\addlegendentry{\begin{footnotesize}h-slope\end{footnotesize}}
\end{loglogaxis}
\draw (-0.5,2.2) node[scale=1.,rotate=90]{Error};
\end{tikzpicture}\caption{Analytical problem: error estimators (p+2)}\label{fig:poutre_p}
\end{flushright}
\end{minipage}
\end{figure}

\begin{table}[ht]\centering
\begin{tabular}{|c|c|c|c|c|c|c|c|}
\hline 
ndof & $\frac{h}{l}$ &  EET & {\small \sfs$_{\lVert\cdot\rVert_2}$}  &\sfs$_{\lVert\cdot\rVert_{erdc}}$ & Flux-free 
&$e_{ex}$\\ 
\hline 
300 & 0.25  & 11.762& 11.498 & 11.129&  9.1987 
& 6.46715 \\ 
\hline 
1140& 0.125  & 5.0387 & 4.8102 & 4.584 & 4.3494 
& 2.9298\\ 
\hline 
4716 & 0.0625  & 2.3934 &2.2723 &2.2659  & 2.1957 
& 1.4436   \\ 
\hline 
10620 &0.04167  & 1.5274& 1.449 & 1.442 &1.4634  
& 0.9508  \\ 
\hline 
\end{tabular} 
\caption{Analytical problem, error estimators (p+2 version)}\label{tab:poutre_p}
\end{table}



\subsection{2D square in plane stress}
A square loaded with a horizontal unit shear force on the top boundary is considered. The displacement of the bottom is imposed to be zero and the two remaining sides are traction-free.


For this second case, we also performed a dual approach for the computation of the SA-field based on the construction of pure equilibrium triangular finite element and the minimization of the complementary energy. This approach provides a reliable SA-field but its difficulty of implementation and its computational cost are prohibitive.

Figure~\ref{fig:carre_p} show the convergence rate of the estimators based on the data in Table~\ref{tab:carre_p}. 
\begin{figure}
\begin{minipage}{.44\textwidth}
\centering
\includegraphics[width=.5\textwidth]{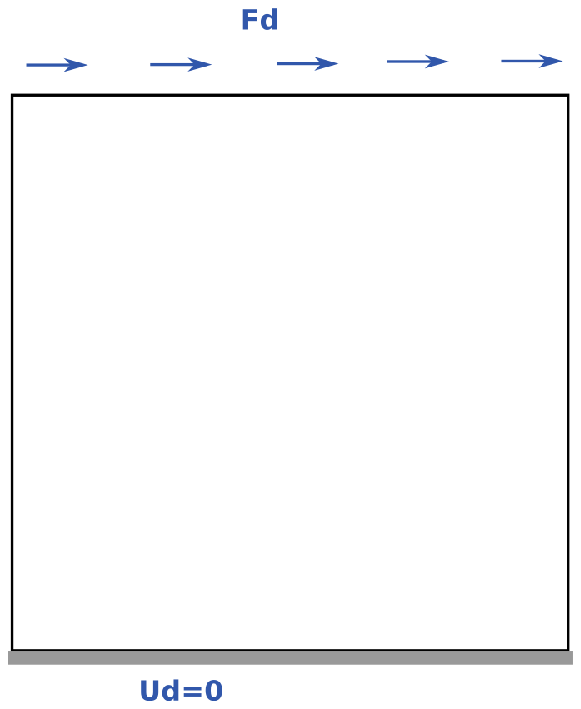}\caption{Square surface model problem}\label{fig:carre_pb}
\end{minipage}
\begin{minipage}{.55\textwidth}
\centering
\begin{tikzpicture}
\begin{loglogaxis}[
scale=.84,
xlabel=$h$,
legend style={at={(.04,1.2)}, anchor=north west}] 

\addplot[color=blue,mark=diamond*] table[x=h,y=EET] {carre_p.txt};
\addlegendentry{\begin{scriptsize}EET\end{scriptsize}}
\addplot table[x=h,y=STARFLEET] {carre_p.txt};
\addlegendentry{\begin{scriptsize}{\sfs $_{\lVert\cdot\rVert_2}$}\end{scriptsize}}
\addplot table[x=h,y=STARFLEET2] {carre_p.txt};
\addlegendentry{\begin{scriptsize}{\sfs $_{\lVert\cdot\rVert_{erdc}}$}\end{scriptsize}}
\addplot table[x=h,y=SPET] {carre_p.txt};
\addlegendentry{\begin{scriptsize}{Flux-free}\end{scriptsize}}
\addplot[draw=violet, mark=triangle*] table[x=h,y=dual] {carre_p.txt};
\addlegendentry{\begin{scriptsize}Dual approach\end{scriptsize}}
\addplot[color=black] coordinates {
(0.1, 0.09)(0.1, 0.18)(0.05 , 0.09)(0.1, 0.09)
};
\addlegendentry{\begin{scriptsize}h-slope\end{scriptsize}}
\end{loglogaxis}
\draw (-0.5,2.2) node[scale=1.,rotate=90]{Error};
\end{tikzpicture}\caption{Square surface, error estimator (p+2)}\label{fig:carre_p}
\end{minipage}
\end{figure}

\begin{table}[ht]\centering
\begin{tabular}{|c|c|c|c|c|c|c|c|}
\hline 
ndof & h &  EET & {\small \sfs$_{\lVert\cdot\rVert_2}$}  & {\small \sfs $_{\lVert\cdot\rVert_{erdc}}$} & Flux-free &Dual approach 
 \\ \hline 
60 & 0.2  &  1.5859&1.3865 & 1,3191 & 1.0143 & 0.7958 
\\ \hline 
928& 0.05 & 0.46949 & 0.41694 & 0,39775 & 0.32142 &0.2496 
\\ \hline 
5820 & 0.02 &0.21495 &0.19493 & 0.18481 & 0.15041 &0.1128 
\\ \hline 
\end{tabular} 
\caption{Square surface, error estimators (p+2 version)}\label{tab:carre_p}
\end{table}

We observe that the dual approach is the most accurate followed by the flux-free approach, the approaches based on the strong prolongation give slightly less accurate estimates. In that case the {\small \sfs$_{\lVert\cdot\rVert_2}$} criterion is very near to the best achievable ({\small \sfs $_{\lVert\cdot\rVert_{erdc}}$}).


\subsection{Pathological case : poor mesh quality}
Let us reuse previous example with a mesh of poor quality where some triangles are very thin (the ratio between the base and the height is equal to $\frac{1}{10}$) as illustrated on  Figure~\ref{fig:maillage_deforme}. The bad behavior of the \textit{a posteriori} error estimators for such meshes is a well-known phenomenon and initiated improvements as in \cite{Flo03bis} where the strong prolongation equation was weaken and partly replaced by energy minimization. 

We observe that the optimal SF criterion and the Flux-Free technique give best estimators, whereas the {\small \textsc{starfleet}$_{\lVert\cdot\rVert_{2}}$} gives large overestimation. These results tend to prove the importance of minimizing the energy when dealing with meshes of poor quality.
 
 \begin{figure}[ht]
\centering
\includegraphics[width=.3\textwidth]{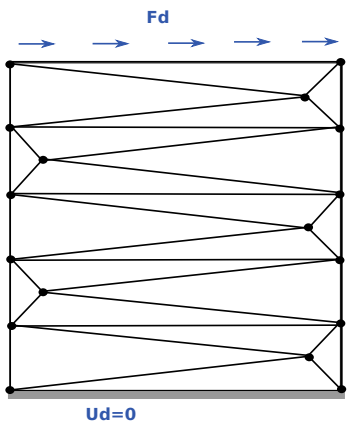}\caption{mesh with thin triangles}\label{fig:maillage_deforme}
 \end{figure}

\begin{table}[ht]\centering
 \begin{tabular}{|c|c|c|c|c|c|c|}
\hline 
ndof & h &  Flux-free &EET  & {\small \sfs$_{\lVert\cdot\rVert_2}$} & {\small \sfs$_{\lVert\cdot\rVert_{erdc}}$} & $\lVert \strain{\depH} \rVert_{ {\hooke}, \Omega}$ \\ 
\hline 
34 & 0.1 & 11.639 & 17.442 & 37.57 & 11.642 & 4.0425 \\ 
\hline 
\end{tabular} 
\caption{\label{tab:pathological_case} 2D square with poor quality mesh, error estimators (p+2 version)}
\end{table}

\subsection{Cracked structure}
Finally, let us consider the structure of Figure~\ref{fig:diez_pb} used in \citep{Par06}, which has two holes. We impose homogeneous Dirichlet boundary conditions in the central hole and on the base. The smaller hole is subjected to a constant unitary pressure $p_0$. A unitary traction force $\un{t}$ is applied normally to the surface on the left upper part. The other remaining boundaries are traction-free. A crack is also initiated from the smaller hole. 

Figure~\ref{fig:diez_p} and Table~\ref{tab:diez_p} show the convergence of the methods depending on the mesh size with p+2 refinement for the element computations. 

\begin{figure}[ht]
\begin{minipage}[b]{.45\textwidth}
\centering
\includegraphics[width=.6\textwidth]{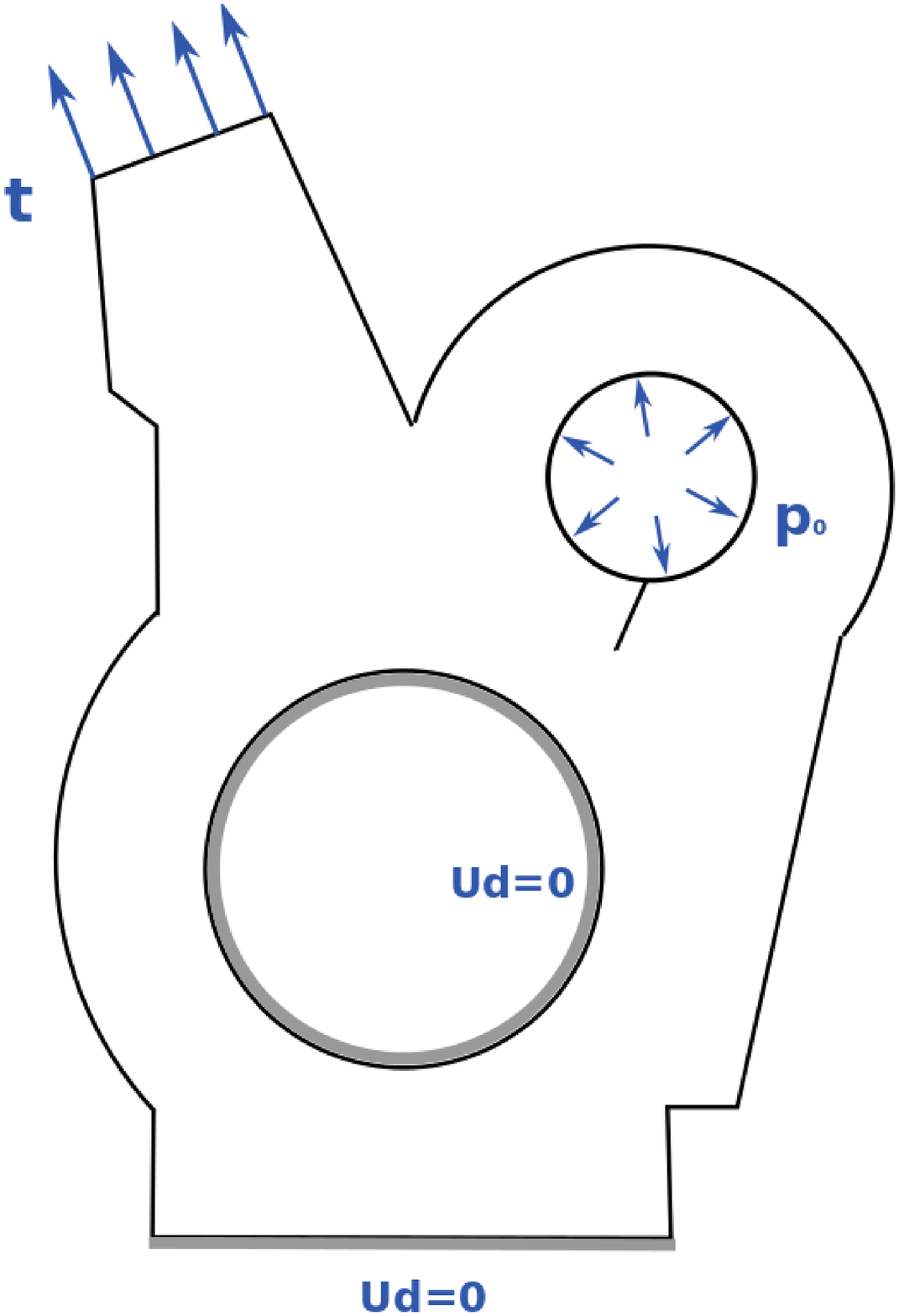}\caption{Crack-opening model problem}\label{fig:diez_pb}
\end{minipage}
\begin{minipage}[b]{.53\textwidth}
\centering
\begin{tikzpicture}
\begin{loglogaxis}[
scale=.83,
xlabel=$h$,
extra y ticks={0.14},
legend style={at={(.05,0.7)}, anchor=south west}] 
%
\addplot[color=blue,mark=diamond*] table[x=h,y=EET] {diez_p.txt};
\addlegendentry{\begin{scriptsize}EET\end{scriptsize}}
\addplot table[x=h,y=STARFLEET] {diez_p.txt};
\addlegendentry{\begin{scriptsize}{ \sfs $_{\lVert\cdot\rVert_2}$}\end{scriptsize}}
\addplot table[x=h,y=STARFLEET2] {diez_p.txt};
\addlegendentry{\begin{scriptsize}{ \sfs $_{\lVert\cdot\rVert_{erdc}}$}\end{scriptsize}}
\addplot table[x=h,y=SPET] {diez_p.txt};
\addlegendentry{\begin{scriptsize}{ Flux-free}\end{scriptsize}}
\addplot[color=black] coordinates {
(0.1, 0.2) (0.1, 0.3) (0.05 , 0.2) (0.1, 0.2) };
\addlegendentry{\begin{scriptsize}$h^\frac{1}{2}$-slope\end{scriptsize}}
\end{loglogaxis}
\draw (-0.5,2.1) node[scale=1.,rotate=90]{Error};
\end{tikzpicture}\caption{Crack-opening, estimators (p+2)}\label{fig:diez_p}
\end{minipage}
\end{figure}

\begin{table}[ht]
\centering
\begin{tabular}{|c|c|c|c|c|c|c|}
\hline 
ndof & h &  EET & {\small \sfs$_{\lVert\cdot\rVert_2}$}  & {\sfs$_{\lVert\cdot\rVert_{erdc}}$} & Flux-free& $\lVert \strain{\depH} \rVert_{ {\hooke}, \Omega}$ \\ 
\hline 
740 & 0.1 & 0.45098 & 0.53401 & 0.42509& 0.36538 &0.86727 \\ 
\hline 
1330& 0.07 & 0.34881 & 0.43558 & 0.32949& 0.28817 &0.88202 \\ 
\hline 
5638 & 0.03 & 0.22324 & 0.33203 & 0.17736 &0.1707  &0.89362\\ 
\hline 
12160& 0.02 & 0.17172 & 0.29363 & 0.13658 & 0.12724  &0.89648\\ 
\hline 
\end{tabular} 
\caption{Crack-opening, error estimators (p+2 version)}\label{tab:diez_p}
\end{table}

\begin{table}[ht]
\centering
\begin{tabular}{|c|c|c|c|c|c|}
\hline 
ndof & h &  EET & {\small \sfs$_{\lVert\cdot\rVert_{EET}}$} & {\sfs$_{\lVert\cdot\rVert_{erdc}}$}  &  Flux-free  \\ 
\hline 
740 & 0.1 &1 & 0.6 &1.4 &2.6 \\ 
\hline 
1330& 0.07 & 1 & 0.6 &1.2 & 2.1 \\ 
\hline 
5638 & 0.03 & 1 &0.7 & 3.5 & 2.4 \\ 
\hline 
12160& 0.02 & 1 & 0.9& >10 & 2.1\\ 
\hline 
\end{tabular} 
\caption{Crack-opening, normalized CPU times }\label{tab:diez_cpu}
\end{table}

The new SA-field technique  {\small \textsc{starfleet}$_{\lVert\cdot\rVert_2}$} behaves poorly when the mesh is refined. This may be due to the singularity of the problem. As expected, with the global minimization of the energy, the optimized strategy {\small \textsc{starfleet}$_{\lVert\cdot\rVert_{erdc}}$}  leads to better results than classical EET in particular for fine meshes, it tends to equal the Flux-Free method which is always the most accurate.

In Table~\ref{tab:diez_cpu} we present sequential CPU-times on a workstation potentially shared with other users, which makes the measures only indicative. We observe that the \textsc{starfleet} implementation of the EET is always the fastest, that the optimized \textsc{starfleet} is not scalable, and that the Flux-Free technique is always more than 2 times slower than EET.

\section{Extensions}\label{sec:extensions}

In this section we discuss the possibility to use the method on more complex cases. {We only discuss the construction of the system related to the strong prolongation hypothesis equivalent to~\eqref{eq:todo1}. Since components are decoupled, we can derive our analysis on a scalar problem.}

\subsection{Higher order triangular elements}
Let us consider triangular straight Lagrange elements of order 2 (obtained by a linear transformation from the reference triangle). For these elements, some nodes are not vertexes. We write the strong prolongation equation with the test fields $(1,x,y, x^2, xy, y^2 )$. Second order works have to satisfy two conditions to be consistent with the geometry:
\begin{equation}
 a_\Gamma x + b_\Gamma y + c_\Gamma  = 0 \Longrightarrow\ \left\{\begin{aligned}
 a_\Gamma W^\Gamma(x^2) + b_\Gamma  W^\Gamma(yx) + c_\Gamma  W^\Gamma(x) &= 0\\
 a_\Gamma W^\Gamma(xy) + b_\Gamma  W^\Gamma(y^2) + c_\Gamma  W^\Gamma(y) &= 0\\
\end{aligned}\right.
\end{equation}
 The final system then writes:
\begin{equation}
\begin{pmatrix}
\matD & & & & & \\
& \matD & & & &  \\
& & \matD & & &  \\
& & & \matD & &  \\
& & & & \matD &  \\
& & & & & \matD  \\
\matc &  \mata & \matb & & &\\
& \matc &  & \mata & &  \matb \\
& & \matc &  & \matb & \mata 
\end{pmatrix}
\begin{pmatrix}
\vecW(1) \\\vecW( x) \\\vecW(y) \\\vecW(x^2) \\\vecW(y^2) \\\vecW(xy)
\end{pmatrix}=
\begin{pmatrix}\vecR(1) \\ \vecR(x)\\ \vecR(y)\\ \vecR(x^2) \\  \vecR(y^2) \\ \vecR(xy) \\0 \\ 0 \\ 0
\end{pmatrix}
\end{equation}

The kernel of this new matrix $\matG$ is still composed of the vectors describing the internal star-patches. Then
 \begin{equation}
\operatorname{dim}(\ker(\matG))= \setvertexi
\end{equation}
Therefore the dimension of the left kernel of matrix $\matG$ is
\begin{equation}
\operatorname{dim}(\ker(\matG^T))=6\card{\setelem}+3\card{\setedgei}- (6\card{\setedgei}-\card{\setvertexi})
\end{equation}
Each element being made out of 3 edges, each internal edge belonging to 2 elements and each external edge belonging to one element, we have $3\card{\setelem} -(2\card{\setedgei} +\card{\setedgee})=0$.
In the end 
\begin{equation}
\operatorname{dim}(\ker(\matG^T))=\card{\setedgei} +2\card{\setedgee}+\card{\setvertexi}= \card{\setedge}+ \card{\setvertex}
\end{equation}
which is exactly the number of nodes of the discrete problem. The interpretation is similar to the case of linear triangular elements.

Once the works are known, one possibility for the definition of the higher order boundary conditions is to assume that they are associated with qua\-dra\-tic distributions of traction force. The last step of the procedure (solving local problems with Neumann boundary conditions) is unmodified.

\begin{remark} 
Isoparametric elements can have curved edges which makes the compatibility condition more complex to write. Moreover the idea to seek distribution of tractions of given shape (linear, quadratic\ldots) is questionable, which makes the definition of higher order problems more difficult. We leave this question open for future investigation.
\end{remark} 

\subsection{Quadrangular elements}
With quadrangular elements, the finite element basis of shape functions is richer. Therefore, we write the strong prolongation equation with the test fields $(1,x,y, xy)$. The system writes :

\begin{equation}
\begin{pmatrix}
\matD & & &  \\
& \matD & &  \\
& & \matD &   \\
& & & \matD \\
\matc &  \mata & \matb & \\
0 & \tilde{\mathbf{y}}_0 & \tilde{\mathbf{x}}_0  & - \tilde{\matI}
\end{pmatrix}
\begin{pmatrix}
\vecW(1) \\\vecW( x) \\\vecW(y) \\\vecW(xy) 
\end{pmatrix}=
\begin{pmatrix}\vecR(1) \\ \vecR(x)\\ \vecR(y)\\ \vecR(xy)  \\ 0 
\end{pmatrix}
\end{equation}
In general, the work $\vecW(xy)$ is linearly independent from the first order works $\vecW(1)$ ,$\vecW( x)$ and $\vecW( y)$. The vertical and horizontal edges  constitute the exception: indeed if the equation of the edge is $x=\tilde{x}_0$ (vertical edge) then $W(xy)=x_0 W(y)$. The diagonal matrices $\tilde{\mathbf{x}}_0$ and $\tilde{\mathbf{y}}_0$ denote these specific edges.

One solution to get rid of this difficulty is to \textit{a priori} seek works associated to linear distribution of tractions. This is the strategy retained in the classical EET technique, which warranties the existence of a solution \citep{Lad01}. In that case, the first order works are sufficient to determine the distribution and $\vecW(xy)$ is simply post-processed in order to define the loading for the element computations.

We end up with the same system as for triangular elements:
\begin{equation}
\begin{pmatrix}
\matD & &   \\
& \matD &   \\
& & \matD    \\
\matc &  \mata & \matb  
\end{pmatrix}
\begin{pmatrix}
\vecW(1) \\\vecW( x) \\\vecW(y)  
\end{pmatrix}=
\begin{pmatrix}\vecR(1) \\ \vecR(x)\\ \vecR(y)\\ 0 
\end{pmatrix}
\end{equation}
The kernel of the matrix $\matG$ is still composed of the vectors describing the internal star-patches. Then
 \begin{equation}
\operatorname{dim}(\ker(\matG))= \setvertexi
\end{equation}
Therefore the dimension of the left kernel of matrix $\matG$ is
\begin{equation}
\operatorname{dim}(\ker(\matG^T))=4\card{\setelem} +2\card{\setedgei}-(4\card{\setedgei} -\card{\setvertexi})
\end{equation}
Each element being made out of 4 edges, each internal edge belonging to 2 elements and each external edge belonging to one element, we have $4\card{\setelem} -(2\card{\setedgei}+\card{\setedgee})=0$.
In the end 
\begin{equation}
\operatorname{dim}(\ker(\matG^T))= \card{\setvertex}
\end{equation}
which correspond to the FE equilibrium for each shape function.

\subsection{3D problems}
We consider a 3D tetrahedral mesh. We introduce the set of faces $\setface$. 
The unknowns are the works on internal faces. We write the prolongation equation with the following test fields:
\begin{equation*}
\left(1,x,y, z \right)
\end{equation*}

Topological Matrix $\matD$ is a rectangular matrix with $\card{\setelem}$ rows and $\card{\setfacei}$ columns. Orientation of tetrahedron being non intuitive, a simple way to fill the matrix is to give an arbitrary orientation to each face, which corresponds to the definition of a normal vector, and to count positively the tetrahedron which is pointed inward by the normal and negatively the one pointed outward.
Then $\matD$ has still only one vector in its left kernel: $\veci=\begin{pmatrix} 1&\ldots&1\end{pmatrix}^T$.

In order to analyze the right kernel, we need the Euler-Poincaré relationship for one 3D connected component and its skin. The analysis is more complex because we need to distinguish between the $h$ circular holes (like donuts)  and the $p$ 3D voids (like  the stone of a peach), in algebraic topology $h$ and $p$ are particular instances of Betti numbers \citep{godbillon98}. We have the following properties:
\begin{equation}\label{eq:3DEuler}
\begin{aligned}
\card{\setvertex} - \card{\setedge} + \card{\setface}  - \card{\setelem}&=1-h+p\\
\card{\setvertexe} - \card{\setedgee} + \card{\setfacee} &= 2(1-h+p)
\end{aligned}
\end{equation}

Moreover, we can add the following simple relationships obtained by counting faces of tetrahedrons and edges of the skin:
\begin{equation}\label{eq:3Dcounting}
\begin{aligned}
4\card{\setelem}&=2 \card{\setfacei}+ \card{\setfacee} \\
3 \card{\setfacee} &= 2\card{\setedgee}
\end{aligned}
\end{equation}

We deduce the dimension of the right kernel of $\matD$ :
\begin{equation}
\begin{aligned}
\operatorname{dim}(\ker(\matD))&=\card{\setfacei}- \card{\setelem} +\operatorname{dim}(\ker(\matD^T))\\
&= \card{\setfacei} +1-h+p-\card{\setface}+\card{\setedge} -\card{\setvertex}+1\\
&=\card{\setvertexe} -\card{\setedgee} +2-h+p-\card{\setedge} -\card{\setvertex}-2(1+p-h)\\
&=\card{\setedgei} -\card{\setvertexi}+h-p
\end{aligned}
\end{equation}
This result has a simple interpretation: internal edges define close circuit between faces (like internal nodes in 2D), but internal nodes are associated to one redundancy (see Figure~\ref{fig:cube}). Each hole offers one extra independent close circuit whereas each pore imposes one extra redundancy.
\begin{figure}
\centering
\includegraphics[width=.4\textwidth]{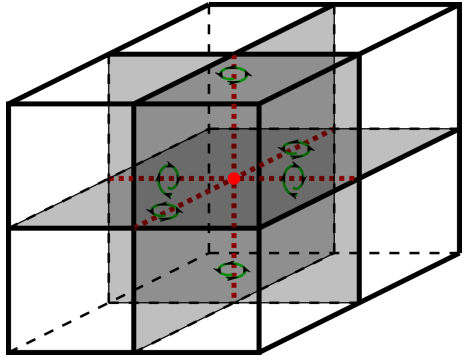}\caption{Simple 3D case, internal faces, edges and node}\label{fig:cube}
\end{figure}


As in 2D problems, works on faces are not independent. For a given face $F$, there exists geometrical real coefficients $(a_F,b_F,c_F,d_F)$ such that,
\begin{equation}
 (x,y,z) \in F \Longleftrightarrow\ a_F x + b_F y + c_F z + d_F = 0
\end{equation}
the coefficients $a_F$, $b_F$ and $c_F$ can be chosen so that $a_F^2+b_F^2 + c_F^2 =|F|^2$ (area of the face). 
Therefore $a_F W^F(x)+b_F W^F(y)+c_F W^F(z)+ d_F W^F(1)=W^F(a_F x +b_F y +c_F z + d_F )=0$.

The global system writes:
\begin{equation}
\begin{pmatrix}
\matD & & &  \\
& \matD & &  \\
& & \matD &   \\
& & & \matD \\
\matd &  \mata & \matb & \matc &
\end{pmatrix}
\begin{pmatrix}
\vecW(1) \\\vecW( x) \\\vecW(y) \\\vecW(z) 
\end{pmatrix}=
\begin{pmatrix}\vecR(1) \\ \vecR(x)\\ \vecR(y)\\ \vecR(z)  \\ 0 
\end{pmatrix}
\end{equation}
Let $\matG$ be the large sparse matrix in previous equation. Let $N_e$ be a kernel vector of $\Delta$ associated to edge $e$, and $(x^e_1,y^e_1,z^e_1)$ and $(x^e_2,y^e_2,z^e_2)$ be two distinct points of $e$, we have:
\begin{equation}
\matG \begin{pmatrix}  N_e & N_e \\ x^e_1 N_e & x^e_2 N_e \\ y^e_1N_e & y^e_2 N_e \\ z^e_1N_e &z^e_2 N_e  \end{pmatrix} = 0
\end{equation}
Then each internal edge generates two kernel vectors. Here again each internal node generates one redundancy in the kernel. So that
\begin{equation}
\operatorname{dim}(\operatorname{ker}(\matG))= 2\card{\setedgei} -\card{\setvertexi}
\end{equation}

Using successively the rank theorem, formulas \eqref{eq:3Dcounting} and \eqref{eq:3DEuler}, we find that: 
\begin{equation}
\begin{aligned}
\operatorname{dim}(\operatorname{ker}(\matG^T))&=4\card{\setelem}  +\card{\setfacei} -(4\card{\setfacei} -\operatorname{dim}(\operatorname{ker}(\matG))) \\
&=4\card{\setelem}  -3 \card{\setfacei} +2 \card{\setedgei} -\card{\setvertexi} = -\card{\setfacei} + \card{\setfacee} +2 \card{\setedgei} -\card{\setvertexi} \\
&=2(1-h+p)+\card{\setedgee}-\card{\setvertexe} -\card{\setfacei} +2 \card{\setedgei} -\card{\setvertexi} \\
&=\card{\setvertex} +(-\card{\setedgee} +2 \card{\setface} -\card{\setfacei} -2\card{\setelem} )\\
&=\card{\setvertex} +(-\frac{3}{2}\card{\setedgee} +2 \card{\setface} -2 \card{\setfacei} -\frac{1}{2}\card{\setfacee} )=\card{\setvertex}
\end{aligned}
\end{equation}
which corresponds to the finite element equilibrium for each node.

The rest of the resolution can be conducted the same way as for 2D problems.

\section{Conclusion}
This paper introduces a new approach for the construction of statically admissible stress fields for \textit{a posteriori} error estimation. More precisely, the method builds a full basis of the edge (in 2D) traction fields satisfying the strong prolongation equation, which enables to try various optimization criteria. In particular, through the choice of the criterion, it is possible to recover the classical EET technique or to minimize the error estimation.

From a practical point of view, the method relies on the exploitation of the known quantities on the boundary of the domain and on a topological and geometrical analysis of the mesh at the global scale. The associated linear systems are so sparse that their size is not problematic, moreover the full knowledge of the kernels (for any order, shape or dimensionality) makes it possible to extract well posed problems where fast solvers can be employed.

The presentation and assessments are given for 2D static linear mechanics problems meshed with linear triangular elements, but extensions to elements with higher order, different shape and different dimensionality are described. In particular, the treatment of 3D problems (with their known kernels) seems no more difficult than 2D problems, which is a feature not encountered in many other SA-field recovery techniques.

{We observe that estimators based on the strong prolongation equation are always less precise than the flux-free technique (and of course dual approaches). The minimization of the estimator inside SA-fields satisfying the strong prolongation equation tends to give results very close to the flux free technique but at a much higher cost. The classical EET criterion leads to a faster estimator, especially with its new implementation, and its quality is not far from the optimized estimator. }

This means that the strong prolongation equation is {a questionable hypothesis} for people willing to build very precise estimators. Note that this lack of pertinence was known since a weak prolongation equation was proposed in \citep{Flo03bis}  to deal with very deformed elements and in \citep{Ple12} to improve the efficiency of the estimator in zones where the error is concentrated. On the other hand, the strong prolongation equation leads to  lower computational costs for the error estimation. {Criteria which barely perturb the sparsity of the problem (like the norm-2 criterion or the classical EET criterion) lead to lower CPU times than classical implementation of EET (which itself is faster than the flux-free approach). Beside the implementation of the our method is simpler and more easily extensible to different elements. Moreover, we expect the method to give interesting performance in cases where matrices can be reused (like the error estimation of a sequence of problems based on the same mesh).}

\paragraph*{Acknowledgments}
The authours would like to thank Professor Pedro Diez (Universitat Polit\`{e}cnica de Catalunya) and Florent Pled (LMT-Cachan) for the helpful and fruitful discussions, and Eric Florentin (LMT-Cachan) for the help in the estimation based on a dual approach.

\bibliographystyle[authoryear]{natbib}

\bibliography{Biblio}







\end{document}